\newtheorem{theorem}{Theorem}
\newtheorem{lemma}[theorem]{Lemma}
\newtheorem{proposition}[theorem]{Proposition}
\newtheorem{Ass}{Assumption}
\newtheorem{remark}{Remark}
\newcommand{\R}{\mathbb R}
\newcommand{\N}{\mathbb N}
\renewcommand{\P}{\mathbb P}
\newcommand{\E}{\mathbb E}
\renewcommand{\d}{{\mathrm d}}
\newcommand{\cB}{{\mathcal B}}
\newcommand{\cD}{{\mathcal D}}
\newcommand{\cF}{{\mathcal F}}
\newcommand{\cI}{{\mathcal I}}
\newcommand{\cN}{{\mathcal N}}
\newcommand{\cO}{{\mathcal O}}
\renewcommand{\tilde}{\widetilde}
\renewcommand{\hat}{\widehat}
\newcommand{\nn}{\textsf{nn}}
\DeclareMathOperator{\var}{Var}
\newcommand{\given}{\, \big|\, }
\newcommand{\ind}{{\mathbf{1}}}
\newcommand{\med}{\mathsf{median}}
\begin{document}

\title[On deviation probabilities in non-parametric regression]{On deviation probabilities in non-parametric regression with heavy-tailed noise}

\author[ ]{Anna Ben-Hamou}
\author[ ]{Arnaud Guyader}

\begin{abstract}
This paper is devoted to the problem of determining the deviation bounds that are achievable in non-parametric regression. We consider the setting where features are supported on a bounded subset of $\R^d$, the regression function is Lipschitz, and the noise is only assumed to have a finite second moment. We first specify the fundamental limits of the problem by establishing a general lower bound on deviation probabilities, and then construct explicit estimators that achieve this bound. These estimators are obtained by applying the median-of-means principle to classical local averaging rules in non-parametric regression, including nearest neighbors and kernel procedures. 

\bigskip

  \noindent \emph{Index Terms}: Non-parametric regression, Median of Means, Sub-Gaussian estimators. 
  
  \noindent \emph{Mathematics Subject Classification (2020)}: 62G08, 62G15, 62G35.
\end{abstract}

\maketitle

\section{Introduction}

\subsection{Setting and main results}

Let $(X,Y)$ be a pair of random variables, where $X$ has distribution $\mu$ on $\R^d$, for $d\geq 1$, and $Y$ is real-valued and satisfies $\E[Y^2]<\infty$. We denote by $Q$ the distribution of the pair $(X,Y)$. The regression function is defined for $\mu$-almost every $x\in\R^d$ as 
\[
r(x):=\E[Y\given X=x]\, .
\]
The pair $(X,Y)$ can be written as
\[
Y=r(X)+\varepsilon\, ,
\]
where the random variable $\varepsilon$, called the noise, satisfies $\E[\varepsilon\given X]=0$. 
Note that 
\[
\E\left[ (Y-r(X))^2\right]=\inf_g\E\left[(Y-g(X))^2\right]\, ,
\]
where the infimum is taken over all measurable functions $g:\R^d\to\R$ such that $\E[g(X)^2]<\infty$. In other words, the random variable $g(X)$ is an optimal approximation of $Y$ by a square-integrable function of $X$, with respect to the $L_2$ risk. 

When the distribution $Q$ of the pair $(X,Y)$ is unknown, one cannot predict $Y$ using $r(X)$. However, assuming that one has access to an i.i.d.\ sample
\[
\cD_n:=\left((X_1,Y_1),\dots,(X_n,Y_n)\right) 
\]
with the same distribution as $(X,Y)$, one can use the data $\cD_n$ in order to construct an estimate $\hat{r}_n$ of the function $r$. 

Throughout the article, $\R^d$ is equipped with the Euclidean distance, and for $x\in\R^d$ and $\varepsilon>0$, $\cB(x,\varepsilon)$ denotes the Euclidean closed ball of center $x$ and radius $\varepsilon$. We will be interested in the following model (see Section~\ref{sec:related-work} for comments on this set of hypotheses). 

\begin{Ass}\label{kjncllxnlankx}
The class $\cF=\cF_{\rho,\sigma}$, with $\rho,\sigma>0$, is the class of distributions $Q$ for $(X,Y)$ satisfying:
\begin{enumerate}
\item[(i)]\label{item1} The support $S$ of $\mu$ is bounded with diameter $D>0$ and for all $x\in S$ and $\varepsilon\in (0,D]$, we have 
\begin{equation}\label{aljcj}
\mu\left(\cB(x,\varepsilon)\right)\geq \rho\varepsilon^d\, .
\end{equation}
\item[(ii)]\label{item2}  For all $x\in S$, we have $\var(\varepsilon\given X=x)\leq \sigma^2$.
\item[(iii)]\label{item3}  The function $r$ is Lipschitz with constant~$1$.
\end{enumerate}
\end{Ass}

In this setting, it is known that the minimax rate of convergence for the $L_2$ risk $\E[(\hat{r}_n(X)-r(X))^2]$ is given by
\[
\left(\frac{\sigma^2}{\rho n}\right)^{\frac{2}{d+2}}\, ,
\]
up to some constants depending on $d$ only, see\cite{MR1920390}, Theorem 3.2, and~\cite{MR673642}. In particular, this rate is achieved by nearest neighbors or kernel procedures. Let us note that usually, this minimax rate is written with $D^{-d}$ instead of $\rho$, where $D$ is the diameter of $S$. However, those two quantities are clearly related by the inequality $\rho\leq D^{-d}$, which comes from~\eqref{aljcj} applied to $\varepsilon=D$. Moreover, one may check that lower bounds for the $L_2$ risk are actually obtained in situations where equation~\eqref{aljcj} is satisfied, namely with $X$ uniform on a $d$-dimensional hypercube.

The main goal of this paper is to obtain such minimax results for deviation probabilities rather than for the $L_2$ risk. More precisely, we start by establishing the following lower bound.

\begin{theorem}\label{thm:lower-bound}
For any $\delta\in ]0,1/16]$, for any $\rho>0$ and $\sigma>0$, for any regression estimate $\hat{r}_n$, there exists a distribution $Q\in\cF_{\rho,\sigma}$ as defined in Assumption \ref{aljcj} such that, when $X\sim\mu$ independent of $\cD_n$, we have
\[
\P\left(\left| \hat{r}_n(X)-r(X)\right|\geq a_d\left(\frac{\sigma^2}{\rho n}\ln\left(\frac{1}{16\delta}\right)\right)^{\frac{1}{d+2}}\right)\geq \delta\, ,
\]
where $a_d>0$ is an explicit constant depending on the dimension $d$ only.
\end{theorem}

Next, we proceed by designing estimators that achieve such deviation bounds for any distribution in $\cF_{\rho,\sigma}$. In particular, we are looking for exponential concentration with only a second moment assumption on the noise. 

Let us recall that a local averaging estimate of the regression function is an estimate that can be written as
%\begin{equation}\label{eq:local-averaging}
\[
\forall x\in\R^d\, ,\; \hat{r}_n(x):=\hat{r}_n(x,\cD_n)=\sum_{i=1}^n W_i(x)Y_i\, ,
\]
%\end{equation}
where for all $i\in\llbracket 1,n\rrbracket$, $W_i(x)$ is a Borel measurable function of $x$ and $X_1,\dots,X_n$ (but not of $Y_1,\dots,Y_n)$, with values in $[0,1]$, and such that $\sum_{i=1}^n W_i(x)=1$. This class includes nearest neighbors, kernel, and partitioning estimates.

As will be shown, given a local averaging rule, an estimator that nearly achieves the bound of Theorem~\ref{thm:lower-bound} may be constructed through the \textit{median-of-means} (MoM) technique: for $m\in\llbracket 1,n\rrbracket$, we consider $m$ disjoint subsets $\cD^{(1)},\dots,\cD^{(m)}$ of $\cD_n$, each of length $N=\left\lfloor n/m\right\rfloor$ (if $n$ is not a multiple of $m$, we simply discard some observations). For each $j\in\llbracket 1,m\rrbracket$ and all $x\in\R^d$, let 
\[
\hat{r}^{(j)}(x):=\hat{r}_N(x,\cD^{(j)})\, ,
\]
for some estimate $\hat{r}_N$, called the \textit{base estimate}. Note that, for a given $x\in\R^d$, the variables $\hat{r}^{(1)}(x),\dots,\hat{r}^{(m)}(x)$ are i.i.d., with the same distribution as $\hat{r}_N(x):=\hat{r}_N(x,\cD_N)$. The median-of-means regression estimate is then defined as
\[
\hat{r}^{\mathsf{mom}}_n(x):=\med\left(\hat{r}^{(1)}(x),\dots, \hat{r}^{(m)}(x)\right)\, ,
\]
where $\med(r_1,\dots,r_m)=r_{(\lceil m/2\rceil)}$ corresponds to the smallest value $r\in\{r_1,\dots,r_m\}$ such that
\[
\left|\left\{j\in\llbracket 1,m\rrbracket\,,\, r_j\leq r\right\}\right|\geq \frac{m}{2}\quad\mathrm{ and }\quad \left|\left\{j\in\llbracket 1,m\rrbracket\,,\, r_j\geq r\right\}\right|\geq \frac{m}{2}\, .
\]

For a variety of base estimates, including nearest neighbors and kernel estimates, we show that, when $Q\in\cF_{\rho,\sigma}$, and when $\sigma$ and $\rho$ are known, the median-of-means estimate satisfies the following deviation inequality: for all $\delta\in [e^{-nc_\cF+1},1[$, and for all $x\in S$, when the number of blocks $m$ is chosen as $m=\left\lceil \ln(
1/\delta)\right\rceil$, we have, for all $x\in S$,
\begin{equation}\label{eq:upper-bound}
\P\left( \left| \hat{r}^{\mathsf{mom}}_n(x)-r(x)\right|\geq b_d \left( \frac{\sigma^2\left\lceil \ln(1/\delta)\right\rceil}{\rho n}\right)^{\frac{1}{d+2}}\right)\leq \delta\, ,
\end{equation}
where $b_d>0$ is an explicit constant possibly depending on $d$, $c_\cF>0$ is an explicit constant that depends on $\rho,\sigma$ and $d$ only, and both $b_d$ and $c_\cF$ depend on the base estimate at stake. Specifically, this is the purpose of Propositions \ref{prop:knn} and \ref{prop:bagged} for nearest neighbors estimates, Proposition \ref{prop:kernel} for kernel estimates, and Proposition \ref{prop:partitioning} for partitioning estimates. Roughly speaking, this means that, in a large domain, the tail of $\left| \hat{r}^{\mathsf{mom}}_n(x)-r(x)\right|$ is upper-bounded by that of $Z^{\frac{2}{d+2}}$, where $Z\sim\cN(0,\frac{\sigma^2}{\rho n})$.

In fact, if for each $x\in S$, one has access to a local $\rho_x$ such that~\eqref{aljcj} is satisfied, then~\eqref{eq:upper-bound} is fulfilled with $\rho_x$ instead of $\rho$. Nevertheless, since~\eqref{eq:upper-bound} is valid for all $x\in S$, it implies that if $X\sim\mu$ independent of $\cD_n$, then
\[
\P\left( \left| \hat{r}^{\mathsf{mom}}_n(X)-r(X)\right|\geq b_d \left( \frac{\sigma^2\left\lceil \ln(1/\delta)\right\rceil}{\rho n}\right)^{\frac{1}{d+2}}\right)\leq \delta\, ,
\]
which, in view of Theorem \ref{thm:lower-bound}, is the optimal deviation bound.

\subsection{Related work}\label{sec:related-work}

To our knowledge, there are only very few results on deviation probabilities in non-parametric regression, at the exception of~\cite{jiang2019non} for the $k$-\nn\ estimate. However, in the latter, the noise is assumed sub-Gaussian. Therefore, the first contribution of the present paper is to delineate the fundamental limits of the problem by providing a general lower bound when only a second moment assumption on the noise is made (see Theorem~\ref{thm:lower-bound}). Our second contribution is to show that this bound can in fact be achieved by combining local averaging rules and the MoM principle.

Let us mention that, except for inequality \eqref{aljcj}, all points of Assumption \ref{kjncllxnlankx} (\textit{i.e.}, bounded support, bounded variance and Lipschitz property) are standard to obtain $L_2$ rates of convergence in non-parametric regression estimation, see for example Chapters 4, 5, and 6 in \cite{MR1920390} for, respectively, partitioning, kernel, and nearest neighbors estimates. Concerning \eqref{aljcj}, the proof of Theorem 1 in \cite{MR2767541}  (see in particular (13.1)) ensures that it is satisfied if $S$ is a finite union of convex bounded sets and $X$ has a density $f$ that is bounded away from zero. Such an assumption is also made by~\cite{jiang2019non}. Comparable assumptions are the so-called cone-condition in \cite{korostelev2012minimax}, Chapter 5, and the notion of standard support in \cite{MR1604449}. As will become clear in the remainder of the article, equation \eqref{aljcj} allows us to obtain inequality \eqref{eq:upper-bound} for all $x\in S$, and not only in average for $X$ with law $\mu$ (see also Remark \ref{zfoihjdoijd}). 

Concerning the MoM principle, it seems that it  was first introduced in works of~\cite{jerrum1986random},~\cite{alon1996space}, in order to obtain sub-Gaussian estimators for the mean of a heavy-tailed random variable, or when outliers may contaminate the data (see also \cite{catoni2012challenging} for a different but related approach). Some variants that do not require any knowledge on the variance have also been proposed recently, see for example \cite{lee2022optimal}, \cite{minsker2021robust}, or~\cite{gobet:hal-03631879}. One caveat of the MoM-estimator of the mean is its dependence on the confidence threshold~$\delta$. However, under stronger assumptions on the distribution, \cite{minsker2019distributed} showed that it is in fact adaptive to $\delta$ up to $\delta\approx e^{-\sqrt{n}}$. In the same vein, \cite{devroye2016sub} proposed a way to design $\delta$-independent sub-Gaussian estimators up to $\delta\approx e^{-n}$. 
 The MoM principle was also generalized to multivariate settings by~\cite{minsker2015geometric},~\cite{hsu2014heavy},~\cite{lerasle2011robust},~\cite{lugosi2019sub}, and applied to a large variety of statistical problems, including linear regression (\cite{audibert2011robust}), empirical risk minimization (\cite{brownlees2015empirical},~\cite{lecue2019learning},~\cite{lugosi2019risk}), classification (\cite{lecue2020robust}), bandits (\cite{bubeck2013bandits}), least-squares density estimation (\cite{lerasle2011robust}), and kernel density estimation (\cite{humbert2022robust}). We refer the reader to~\cite{lecue2020robustAoS},  ~\cite{lerasle2019lecture}, or~\cite{lugosi2019mean} for more references on all of these subjects.

Finally, let us note that the minimax deviation inequalities \eqref{eq:upper-bound} are obtained by optimizing on the tuning parameter of the base estimate (\textit{e.g.}, the number $k$ of neighbors for $k$-\nn\ estimates, the bandwidth $h$ for kernel estimates), and this optimization step typically requires the knowledge of $\rho$ and $\sigma$. In this respect, an open question would be to design procedures to choose this tuning parameter in an adaptive way. For the $L_2$ risk, this is possible for instance by splitting the sample or cross-validation, as explained for example in \cite{MR1920390}, Chapters 7 and 8. Unfortunately, this issue seems to be more complicated in the present case of deviation bounds.

\subsection{Organization of the paper}

In Section~\ref{sec:lower-bound}, we give the first steps for the proof of Theorem~\ref{thm:lower-bound}. Then Section~\ref{sec:preliminary} provides a guideline for proving ~\eqref{eq:upper-bound}. It also contains two general observations about our estimators: the fact that they are robust to the presence of outliers in the sample, and the fact that they can be turned into $\delta$-independent estimators, using a strategy introduced by~\cite{devroye2016sub}.  In the next two sections, Equation~\eqref{eq:upper-bound} is established for various choices of local averaging procedures, namely nearest neighbor methods in Section~\ref{sec:nearest-neighbors}, and kernel and partitioning methods in Section~\ref{sec:kernel-partitioning}. Let us point out that, for partitioning estimates (Section~\ref{subsec:partitioning}), we are able to obtain a uniform control on $\sup_{x\in S}|\hat{r}_n^\mathsf{mom}(x)-r(x)|$. Finally, Section~\ref{sec:proofs} gathers the proofs of several technical results.

\section{Lower bound}\label{sec:lower-bound}
In order to establish Theorem~\ref{thm:lower-bound}, we consider the following setting. For some $A>0$, let 
\[
S=[0\, ,\, A]^d\, ,
\]
and consider the model
\[
Y=r(X)+\varepsilon\, ,
\]
where $X\sim\mu=\mathrm{Unif}(S)$, where the noise $\varepsilon\sim\cN(0,\sigma^2)$ is independent of $X$, and where $r$ is a $1$-Lipschitz function on $S$. Notice that this model satisfies Assumption \ref{kjncllxnlankx}, meaning that $Q\in\mathcal{F}$. Indeed, the support $S$ is bounded with diameter $D=A\sqrt{d}$ and, for all $x\in S$ and $\varepsilon\in(0,D]$, one has 
\[
\mu\left(\cB(x,\varepsilon)\right)\geq \mu\left(\cB(x,\varepsilon/\sqrt{d})\right)\geq\mu\left(\cB(0,\varepsilon/\sqrt{d})\right)=\frac{\pi^{d/2}}{(2A)^d d^{d/2}\Gamma(1+d/2)}\varepsilon^d=:\rho\varepsilon^d\, ,
\]
and any value of $\rho>0$ can be achieved by adjusting the sidelength~$A$.

%In this setting, we show that for any $\delta\in ]0,1/16]$, for any regression estimate $\hat{r}_n$, there exists a 1-Lipschitz mapping $r=r_n:S\to\R$ such that, when $X\sim\mu$ independent of $\cD_n$, we have
%\begin{equation}\label{eq:lower-boundbis}
%\P\left(\left| \hat{r}_n(X)-r(X)\right|\geq \frac{1-2^{-\frac{1}{d}}}{2}\left(\frac{\pi(d+1)^2\sigma^2A^d}{n}\ln\left(\frac{1}{16\delta}\right)\right)^{\frac{1}{d+2}}\right)\geq \delta\, .
%\end{equation}
%

The starting point of the proof uses the same idea as in the proof of Theorem 3.2 in \cite{MR1920390} (see also Section 2 in \cite{MR673642}). Namely, let $\mathcal C:=\left[-\tfrac{1}{2},\tfrac{1}{2}\right]^d$ and $\partial\mathcal C$ its frontier, then define the $1$-Lipschitz function
\[
g:x\mapsto \mathrm{dist}(x,\partial\mathcal C)\ \mathbf{1}_{x\in\mathcal C}=\inf\{\|x-y\|,y\in\partial\mathcal C\}\ \mathbf{1}_{x\in\mathcal C}\, .
\]
Let $M\geq 1$ be some integer to be specified later and consider a partition of $S$ by $K:=M^d$ hypercubes $A_j$ of sidelength $h:=A/M$ and with centers $a_j$, and let the functions $g_1,\dots,g_{K}$ be defined by
\[
\forall j\in\llbracket 1,K\rrbracket\, ,\; g_j(x):=h g\left(h^{-1}\left(x-a_j\right)\right)\, .
\]
%Hence the support of $g_j$ is $A_j=\left[a_j-\tfrac{h}{2};a_j+\tfrac{h}{2}\right]^d$. 
We then look for a ``bad'' regression function among the functions
\[
r^{(c)}=\sum_{j=1}^{K} c_j g_j\, ,\, c\in\{-1,1\}^{K}\, ,
\]
which are all $1$-Lipschitz by~\cite{MR1920390}. In this setting, we are led to the following result, which is proved in Section~\ref{subsec:proof-lower}. 

\begin{proposition}\label{prop:lower-bis}
For any $\delta\in ]0,1/16]$, for any regression estimate $\hat{r}_n$, there exists  $c\in\{-1,1\}^K$ such that, when $X\sim\mu$ independent of $\cD_n$, we have
\begin{equation}
\P\left(\left| \hat{r}_n(X)-r^{(c)}(X)\right|\geq \frac{1-2^{-\frac{1}{d}}}{2}\left(\frac{\pi(d+1)^2\sigma^2A^d}{n}\ln\left(\frac{1}{16\delta}\right)\right)^{\frac{1}{d+2}}\right)\geq \delta\, .
\end{equation}
\end{proposition}

Since $A^d=\frac{\pi^{d/2}}{2^d d^{d/2}\Gamma(1+d/2)} \rho^{-1}$, this establishes Theorem~\ref{thm:lower-bound} with
\[
a_d=  \frac{1-2^{-\frac{1}{d}}}{2} \left(\frac{\pi^{1+d/2}(d+1)^2}{2^d d^{d/2}\Gamma(1+d/2)}\right)^{\frac{1}{d+2}}\, .
\]

%\begin{remark}\label{zfoihjlsjcndoijd} The proof of Proposition~\ref{prop:lower-bis} uses a Bayesian argument, also known as the probabilistic method: we show that it holds on average for a uniform random vector $C=(C_1,\dots,C_{K})\in\{-1,1\}^{K}$, that is, i.i.d.\ Rademacher random variables with parameter $1/2$.
%\end{remark}

\section{Median-of-means versions of local averaging procedures}\label{sec:preliminary}

\subsection{Two key lemmas}

This section exposes two generic results that will be of constant use for proving upper bounds. The first lemma relates deviation probabilities for the median-of-means estimate $\hat{r}^{\mathsf{mom}}_n$ with deviation probabilities of the base estimate $\hat{r}_N$. We point out that this result is valid for any base estimate $\hat{r}_N$, not only for local averaging rules.

\begin{lemma}\label{lem:binomial}
Let $\hat{r}^{\mathsf{mom}}_n$ be the median-of-means estimate of $r$ constructed on $m$ blocks with base estimate $\hat{r}_N$. For all $x\in \R^d$ and all $t\geq 0$, we have
\[
\P\left( \left| \hat{r}^{\mathsf{mom}}_n(x)-r(x)\right|\geq t\right)\leq 2^m p_t(x)^{m/2} \, ,
\]
where 
\[
p_t(x):=\P\left(\left| \hat{r}_N(x)-r(x)\right|\geq t \right)\, .
\]
\end{lemma}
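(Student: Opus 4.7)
The plan is to reduce the deviation event for the median to a binomial tail event for the number of ``bad'' blocks, and then crudely bound this binomial tail.

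First I would unpack the definition of $\med$ given in the introduction. If $\hat{r}^{\mathsf{mom}}_n(x) \geq r(x)+t$, then since by construction at least $m/2$ of the values $\hat{r}^{(j)}(x)$ are $\geq \hat{r}^{\mathsf{mom}}_n(x)$, at least $m/2$ blocks satisfy $\hat{r}^{(j)}(x) - r(x) \geq t$. Symmetrically, if $\hat{r}^{\mathsf{mom}}_n(x) \leq r(x)-t$, then at least $m/2$ blocks satisfy $\hat{r}^{(j)}(x) - r(x) \leq -t$. In either case, the event $\{|\hat{r}^{\mathsf{mom}}_n(x) - r(x)| \geq t\}$ is contained in $\{\sum_{j=1}^m B_j \geq m/2\}$, where
\[
B_j := \ind\{|\hat{r}^{(j)}(x) - r(x)| \geq t\}.
\]
Since the blocks $\cD^{(1)},\dots,\cD^{(m)}$ are disjoint, the $B_j$'s are i.i.d.\ Bernoulli with parameter $p_t(x)$.

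Then I would bound the binomial tail by
\[
\P\!\left(\sum_{j=1}^m B_j \geq m/2\right) = \sum_{k = \lceil m/2\rceil}^{m} \binom{m}{k} p_t(x)^k (1-p_t(x))^{m-k} \leq p_t(x)^{m/2} \sum_{k=\lceil m/2\rceil}^m \binom{m}{k},
\]
where I used that $p_t(x) \leq 1$ implies $p_t(x)^k \leq p_t(x)^{m/2}$ for $k \geq m/2$, and trivially $(1-p_t(x))^{m-k} \leq 1$. The remaining sum of binomial coefficients is bounded by $2^m$, yielding the desired inequality.

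There is no real obstacle here — the argument is essentially the standard MoM reduction. The only mildly delicate point is getting the median inequality direction right under the exact definition $\med(r_1,\dots,r_m) = r_{(\lceil m/2\rceil)}$ (as opposed to a mean of two middle values or similar convention), but the two-sided counting argument above handles both cases uniformly, so no case-by-case analysis in $m$ parity is needed.
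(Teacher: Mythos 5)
Your proof is correct. The first step (reduction of the median deviation to the event that at least $m/2$ blocks deviate, via the two-sided counting definition of the median) is exactly the paper's first step. Where you diverge is in bounding the binomial tail: you use the elementary counting estimate
\[
\P\Bigl(\sum_{j=1}^m B_j\geq \tfrac{m}{2}\Bigr)\leq p^{m/2}\sum_{k=\lceil m/2\rceil}^m\binom{m}{k}\leq 2^m p^{m/2}\, ,
\]
whereas the paper applies a Chernoff bound with the optimal $\lambda=\ln\bigl(\frac{1-p}{p}\bigr)$, obtaining the intermediate sharper form $\bigl(2\sqrt{p(1-p)}\bigr)^m$ before relaxing it to $2^mp^{m/2}$. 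Both yield the stated inequality, and your route is shorter and avoids the optimization. What the Chernoff route buys is flexibility: the same computation with threshold $m/4$ in place of $m/2$ gives the constant $(4/3^{3/4})^m p^{m/4}$ used in the robustness section (Section~\ref{sec:robustness}), which is strictly better than the $2^m p^{m/4}$ your counting argument would produce there; for the lemma as stated, however, the two are equivalent.
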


Assume now that the base estimate takes the form 
\begin{equation}\label{eq:local-averaging}
\forall x\in\R^d\, ,\; \hat{r}_N(x)=\hat{r}_N(x,\cD_N)=\sum_{i=1}^NW_i(x)Y_i\, ,
\end{equation}
where for all $i\in\llbracket 1,N\rrbracket$, $W_i(x)$ is a Borel measurable function of $x$ and $X_1,\dots,X_N$ (but not of $Y_1,\dots,Y_N)$, with values in $[0,1]$, and such that $\sum_{i=1}^N W_i(x)=1$. Our second lemma gives a bias--variance decomposition for deviation probabilities of local averaging estimates.

\begin{lemma}\label{lem:bias-variance}
Suppose that $(ii)$ and $(iii)$ in Assumption \ref{kjncllxnlankx} are satisfied. Then, for all $x\in \R^d$, we have
\begin{equation}\label{eq:bias-variance1}
\left|\hat{r}_N(x)-r(x)\right|\leq \left|\sum_{i=1}^N W_i(x)\varepsilon_i\right|+\sum_{i=1}^N W_i(x)\|X_i-x\|\, ,
\end{equation}
where $\varepsilon_i=Y_i-r(X_i)$. In addition, for all $s,t>0$, we have
\begin{equation}\label{eq:bias-variance2}
p_{t+s}(x)\leq \frac{\sigma^2 \E\left[\sum_{i=1}^N W_i(x)^2\right]}{t^2}+ \P\left( \sum_{i=1}^N W_i(x)\|X_i-x\|\geq s\right)\, ,
\end{equation}
with $p_{t+s}(x)$ as defined in Lemma \ref{lem:binomial}.
\end{lemma}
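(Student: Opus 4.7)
The proof breaks naturally into the two assertions, both relying on the same decomposition.

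For \eqref{eq:bias-variance1}, the plan is a direct triangle-inequality argument. Since the weights sum to one, I can write
\[
\hat{r}_N(x)-r(x)=\sum_{i=1}^N W_i(x)\bigl(Y_i-r(x)\bigr)=\sum_{i=1}^N W_i(x)\varepsilon_i+\sum_{i=1}^N W_i(x)\bigl(r(X_i)-r(x)\bigr).
\]
Taking absolute values, using $W_i(x)\in[0,1]$, and applying the Lipschitz property \ref{3} to bound $|r(X_i)-r(x)|$ by $\|X_i-x\|$ gives \eqref{eq:bias-variance1} immediately.

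For \eqref{eq:bias-variance2}, the strategy is a union bound combined with a conditional second-moment estimate. From \eqref{eq:bias-variance1}, the event $\{|\hat{r}_N(x)-r(x)|\geq t+s\}$ is contained in the union of
\[
\Bigl\{\Bigl|\sum_{i=1}^N W_i(x)\varepsilon_i\Bigr|\geq t\Bigr\}\quad\text{and}\quad\Bigl\{\sum_{i=1}^N W_i(x)\|X_i-x\|\geq s\Bigr\}.
\]
For the first event, I would condition on $\bX=(X_1,\dots,X_N)$. Since $W_i(x)$ is $\bX$-measurable while, conditional on $\bX$, the noises $\varepsilon_1,\dots,\varepsilon_N$ are centered, independent, and have conditional variance at most $\sigma^2$ by \ref{2}, the cross terms vanish and
\[
\E\Bigl[\Bigl(\sum_{i=1}^N W_i(x)\varepsilon_i\Bigr)^2\Bigm|\bX\Bigr]=\sum_{i=1}^N W_i(x)^2\,\var(\varepsilon_i\given X_i)\leq \sigma^2\sum_{i=1}^N W_i(x)^2.
\]
Taking expectations and applying Markov's inequality yields
\[
\P\Bigl(\Bigl|\sum_{i=1}^N W_i(x)\varepsilon_i\Bigr|\geq t\Bigr)\leq \frac{\sigma^2\,\E\bigl[\sum_{i=1}^N W_i(x)^2\bigr]}{t^2},
\]
and combining this with the probability of the second event by a union bound delivers \eqref{eq:bias-variance2}.

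There is no real obstacle here; the one point that deserves care is the measurability structure. I need $W_i(x)$ to depend only on $x$ and $X_1,\dots,X_N$ (which is built into the definition~\eqref{eq:local-averaging} of a local averaging estimate) so that, after conditioning on $\bX$, the weights become deterministic and the conditional noises remain centered and independent with variance controlled by $\sigma^2$. This is precisely what makes the conditional orthogonality of the noise terms work, and it is the only step where assumption \ref{2} is used.
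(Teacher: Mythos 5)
Your proposal is correct and follows essentially the same route as the paper: the same weight-sum decomposition plus the Lipschitz bound for \eqref{eq:bias-variance1}, and a union bound with Markov's inequality and the vanishing of the conditional cross terms $\E[\varepsilon_i\varepsilon_j\given X_1,\dots,X_N]$ for \eqref{eq:bias-variance2}. The only cosmetic difference is that you condition on $\bX$ before applying Markov, while the paper applies Markov first and then computes the second moment by conditioning; these are equivalent.
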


In Sections \ref{sec:nearest-neighbors} and \ref{sec:kernel-partitioning}, we will investigate several instances of local averaging procedures for the base estimate $\hat{r}_N$. In each case, we first use Lemma~\ref{lem:bias-variance} in order to determine $t$ and $s$ such that $p_{t+s}(x)\leq \frac{1}{4e^2}$. Lemma~\ref{lem:binomial} then entails that
\[
\P\left( \left| \hat{r}^{\mathsf{mom}}_n(x)-r(x)\right|\geq t+s\right)\leq e^{-m}\, .
\]
The number of blocks $m$ can then be chosen as $\lceil \ln(1/\delta)\rceil$, for some target confidence threshold $\delta\in [e^{-n},1[$, so that the probability above is less than $\delta$. Next, provided $\sigma$ and $\rho$ are known, a tuning parameter of the base estimate (\textit{e.g.}, the number $k$ of neighbors for $k$-\nn\ estimates, the bandwidth $h$ for kernel estimates) can then be optimized to get a bound in the flavor of~\eqref{eq:upper-bound}, imposing additional constraints on $\delta$.

\subsection{Robustness}\label{subsec:robustness}

In addition to exhibiting concentration properties, the estimate $\hat{r}^{\mathsf{mom}}_n$ also stands out through its strong robustness to outliers. In this section, we consider a general contamination scheme which was introduced by~\cite{diakonikolas2019robust}, and which was also considered in~\cite{lecue2019learning}. We assume that the index set $\{1,\dots,n\}$ is divided into two disjoint subsets: the subset $\cI$ of inliers, and the subset $\cO$ of outliers. The sequence $(X_i,Y_i)_{i\in\cI}$ is i.i.d.\ with the same law as $(X,Y)\sim Q\in\cF$. No assumption is made on the variables $(X_i,Y_i)_{i\in\cO}$. We denote by $\P_{\cO\cup\cI}$ the distribution corresponding to such a contaminated sample.

Let $\hat{r}_n^{\mathsf{mom}}$ be the median-of-means estimate of $r$ constructed on $m$ blocks, with base estimate $\hat{r}_N$. Assume that the number $|\cO|$ of outliers in the original sample is stricly less than $m/2$. For simplicity, we here further assume that $|\cO|\leq m/4$. Then, letting $\cB$ be the set of blocks that do not intersect $\cO$, we have, for all $s,t> 0$,
\begin{align*}
\P_{\cO\cup\cI}\left( \left|\hat{r}_n^{\mathsf{mom}}(x)-r(x)\right|\geq t+s\right)&\leq \P_{\cO\cup\cI}\left(\sum_{j=1}^m \ind_{\left\{ \left|\hat{r}^{(j)}(x)-r(x)\right|\geq t+s\right\}} \geq \frac{m}{2}\right)\\
&\leq \P_{\cO\cup\cI}\left(|\cB^c|+\sum_{j\in\cB} \ind_{\left\{ \left|\hat{r}^{(j)}(x)-r(x)\right|\geq t+s\right\}} \geq \frac{m}{2}\right)\, .
\end{align*}
Observing that $|\cB^c|\leq |\cO|\leq m/4$, we get, with a slight abuse of notation,
\[
\P_{\cO\cup\cI}\left( \left|\hat{r}_n^{\mathsf{mom}}(x)-r(x)\right|\geq t+s\right)\leq  \P\left(\sum_{j=1}^m \ind_{\left\{ \left|\hat{r}^{(j)}(x)-r(x)\right|\geq t+s\right\}} \geq \frac{m}{4}\right)\, .
\]
Now, the proof of Lemma~\ref{lem:binomial} reveals that
\[
\P\left(\sum_{j=1}^m \ind_{\left\{ \left|\hat{r}^{(j)}(x)-r(x)\right|\geq t+s\right\}} \geq \frac{m}{4}\right)\leq \left(\frac{4}{3^{3/4}}\right)^mp_{t+s}(x)^{m/4}\, ,
\]
which is less than $e^{-m}$ for $p_{t+s}(x)\leq \frac{27}{(4e)^4}$. Hence, provided $m\geq 4|\cO|$ and modulo some minor changes in the constants, the same strategy as the one described just after the statement of Lemma~\ref{lem:bias-variance} can be applied to the contaminated setting. 

\begin{remark}\label{zhcb}
More generally, define $\alpha$ as the proportion of outliers in the original sample and assume that $m >2|\cO|=2\alpha n$, then a straightforward adaptation of the previous reasoning shows that 
\[
\P\left(\sum_{j=1}^m \ind_{\left\{ \left|\hat{r}^{(j)}(x)-r(x)\right|\geq t+s\right\}} \geq \frac{m}{2}-\alpha n\right)\leq 2^mp_{t+s}(x)^{\frac{m}{2}-\alpha n}\, ,
\]
which is less than $e^{-m}$ for $p_{t+s}(x)\leq (2e)^{-\frac{2m}{m-2\alpha n}}$. Results in the same spirit are available in the literature for the problems of mean estimation~\cite{minsker2019uniformboundsrobustmean}, Lemma 5.4, and robust regression ~\cite{oliveira2023trimmedsamplemeansrobust}, Lemma D.1.

\end{remark}

%For example, in the case of uniform nearest neighbors detailed in Section \ref{zdpzdjp}, some computation shows that the estimator $\hat{r}^{\mathsf{mom}}_n$ constructed on $m$ blocks with $k^\star$-\nn\ base estimators, where
%\[
%k^\star=\left\lfloor \left(\frac{1}{8}(2e)^{-\frac{2m}{m-2\varepsilon n}}\sigma^2\left(\frac{\rho n}{m}\right)^{\frac{2}{d}}\right)^{\frac{d}{d+2}}\right\rfloor\, ,
%\]
%which belongs to $\llbracket 1,N\rrbracket=\llbracket 1,\left\lfloor \frac{n}{m}\right\rfloor\rrbracket$ provided
%\[
%1\leq\left(\frac{1}{8}(2e)^{-\frac{2m}{m-2\varepsilon n}}\sigma^2\left(\frac{\rho n}{m}\right)^{\frac{2}{d}}\right)^{\frac{d}{d+2}} \leq \frac{n}{m}\, ,
%\]
%satisfies
%\[
%\P\left( \left| \hat{r}^{\mathsf{mom}}_n(x)-r(x)\right|\geq 8\sqrt{2}(2e)^{\frac{2m}{m-2\varepsilon n}}\left(\frac{\sigma^2\left\lceil \ln(1/\delta)\right\rceil}{\rho n}\right)^{\frac{1}{d+2}}\right)\leq \delta\, .
%\]

\subsection{From $\delta$-dependent to $\delta$-independent estimators}

One feature  of $\hat r^{\mathsf{mom}}_n(x)$ is that it depends on $m$, the number of blocks, and thus on the pre-chosen  confidence threshold $\delta=e^{-m}$. In this section, we give an argument due to~\cite{devroye2016sub}, allowing to turn $\hat r^{\mathsf{mom}}_n(x)$ into an estimator satisfying~\eqref{eq:upper-bound} simultaneously for an infinity of $\delta$. Let us first recall that, when $\sigma$ and $\rho$ are known, then for all integer $m$ between $1$ and $\lfloor c n\rfloor$ (for some constant $c$ depending only on $\rho$, $\sigma$ and $d$), one is able to construct a confidence interval $\hat{I}_m$ for $r(x)$ with level $1-e^{-m}$. Now let
\[
\hat{m}:=\min\left\{ 1\leq m\leq \lfloor c n\rfloor\, , \bigcap_{j=m}^{\lfloor c n\rfloor} \hat{I}_j\neq \emptyset\right\}\, ,
\]
and define the estimator $\tilde{r}_n(x)$ as the midpoint of the interval $\bigcap_{j=\hat{m}}^{\lfloor c n\rfloor} \hat{I}_j$. Let $\delta\in \left[ \frac{e^{-\lfloor c n\rfloor}}{1-e^{-1}},1\right[$  and let $m_\delta$ be the smallest integer $m\in \llbracket 1,\lfloor c n\rfloor\rrbracket$ such that $\delta\geq \frac{e^{-m}}{1-e^{-1}}$, namely $m_\delta=\left\lceil \ln\left(\frac{1}{(1-e^{-1})\delta}\right)\right\rceil$. Then $\tilde{r}_n(x)$ satisfies
\begin{equation}\label{eq:median-of-means-independent}
\P\left( \left| \tilde{r}_n(x) -r(x)\right|>  | \hat{I}_{m_\delta}|\right)\leq \delta\, .
\end{equation}
 Indeed, by a union bound, we have
\begin{align*}
\P\left( \bigcap_{j=m_\delta}^{\lfloor c n\rfloor} \left\{ r(x) \in \hat{I}_j\right\}\right) & \geq 1-\sum_{j=m_\delta}^{\lfloor c n\rfloor} e^{-j} \geq 1-\frac{e^{-m_\delta}}{1-e^{-1}} \geq 1-\delta\, .
\end{align*}
Now, on the event $\bigcap_{j=m_\delta}^{\lfloor c n\rfloor} \left\{ r(x) \in \hat{I}_j\right\}$, one has $\bigcap_{j=m_\delta}^{\lfloor c n\rfloor} \hat{I}_j\neq \emptyset$, hence $\hat{m}\leq m_\delta$. But if $\hat{m}\leq m_\delta$, then $\tilde{r}_n(x)$ also belongs to $\bigcap_{j=m_\delta}^{\lfloor c n\rfloor} \hat{I}_j$, and in particular
\[
\left| \tilde{r}_n(x) -r(x)\right|\leq | \hat{I}_{m_\delta}|\, ,
\]
which establishes~\eqref{eq:median-of-means-independent}.

%For instance, for $k$-\nn\ base estimates, one may combine Proposition~\ref{prop:knn} and this method to construct an estimator $\tilde{r}_n(x)$ which is such that, for all $\delta\in\left[\frac{e^{-cn+1}}{1-e^{-1}}, 1\right[$,
%\[
%\P\left( \left| \tilde{r}_n(x)-r(x)\right|\geq 64e^2\sqrt{2}\left(\frac{\sigma^2\left\lceil \ln\left(\frac{1}{(1-e^{-1})\delta}\right)\right\rceil}{\rho n}\right)^{\frac{1}{d+2}}\right)\leq \delta\, ,
%\]
%with 
%\[
%c=\rho\left(\frac{\sigma}{4e\sqrt{2}}\right)^d\wedge \frac{32e^2}{\sigma^2\rho^{2/d}}\wedge 1\, .
%\]

\section{Nearest neighbors estimation}\label{sec:nearest-neighbors}

For $x\in\R^d$, let 
\[
\left(X_{(1)}(x),Y_{(1)}(x)\right),\dots, \left(X_{(N)}(x),Y_{(N)}(x)\right)
\]
a reordering of the data $\cD_N$ according to increasing values of $\|X_i-x\|$, that is
\[
\|X_{(1)}(x)-x\|\leq \dots\leq \|X_{(N)}(x)-x\|\, ,
\]
where, if necessary, distance ties are broken by simulating auxiliary random variables $(U_1,\dots,U_N)$ i.i.d.\ with uniform law on $[0,1]$ and sorting them. The weighted nearest neighbors estimate is defined as 
\begin{equation}\label{alkcjc}
\hat{r}_N(x):=\sum_{i=1}^N v_i Y_{(i)}(x)\, ,
\end{equation}
where $(v_1,\dots,v_N)$ is a deterministic vector in $[0,1]^N$ satisfying $\sum_{i=1}^N v_i=1$. Note that this estimate is of the form~\eqref{eq:local-averaging}, with $W_i(x)=v_{\sigma(i)},$ where $\sigma$ is a random permutation (depending on $x$) such that $X_i=X_{(\sigma(i))}(x)$. We refer the interested reader to \cite{MR3445317}, Chapter 8, or  \cite{samworth2012optimal} and references therein for more information on this topic.

In this context, the variance term of Lemma \ref{lem:bias-variance} reduces to
\begin{equation}\label{eq:variance-nearest-neighbors}
\E\left[\sum_{i=1}^N W_i(x)^2\right]=\sum_{i=1}^N v_{\sigma(i)}^2 =\sum_{i=1}^N v_i^2\, \cdot 
\end{equation}
As for the bias term, letting $D_{(i)}(x):=\|X_{(i)}(x)-x\|$, we rewrite it as 
\begin{equation}\label{peijcfpeijfc}
\sum_{i=1}^N W_i(x)\|X_i-x\|=\sum_{i=1}^Nv_i D_{(i)}(x)\, .
\end{equation}
The following lemma, whose proof is housed in Section~\ref{sec:proofs}, allows us to control the expected nearest neighbor distances (see Remark \ref{zfoihjdoijd} below for a comment on this result). 

\begin{lemma}\label{lem:expected-distance}
Under Assumption \ref{kjncllxnlankx}$(i)$, for all $x\in S$ and $i\in\llbracket 1,N\rrbracket$, one has
\[
\E\left[ D_{(i)}(x)\right]\leq 2\left(\frac{i}{\rho(N+1)}\right)^{1/d}\, .
\]
\end{lemma}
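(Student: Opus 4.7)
The plan is to derive the sharper identity $\E[\mu(\cB(x, D_{(i)}(x)))] = i/(N+1)$ by an exchangeability argument, then combine it with the lower bound \eqref{aljcj} and Jensen's inequality. This actually yields the bound with constant $1$ instead of $2$; the factor $2$ in the statement is slack that is convenient to work with later.

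\textbf{Step 1 (exchangeability).} Introduce an auxiliary $X_{N+1} \sim \mu$, independent of $\cD_N$ (and of the tie-breaking variables $U_1,\dots,U_N$, to which we adjoin a $U_{N+1}$). By construction, conditionally on $\cD_N$,
\[
\P\bigl(\|X_{N+1}-x\| \leq D_{(i)}(x) \,\big|\, \cD_N\bigr) = \mu\bigl(\cB(x, D_{(i)}(x))\bigr).
\]
On the other hand, $(X_1,U_1),\dots,(X_{N+1},U_{N+1})$ are exchangeable, so the rank of $X_{N+1}$ among the $N+1$ samples when ordered by distance to $x$ (with ties broken via the $U_j$'s) is uniform on $\llbracket 1, N+1\rrbracket$. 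Hence
\[
\P\bigl(\|X_{N+1}-x\| \leq D_{(i)}(x)\bigr) = \frac{i}{N+1},
\]
and taking expectations in the displayed identity above gives $\E[\mu(\cB(x, D_{(i)}(x)))] = i/(N+1)$.

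\textbf{Step 2 (applying Assumption \ref{kjncllxnlankx}\ref{1}).} Since $x\in S$ and all $X_j\in S$, and since $S$ has diameter $D$, we have $D_{(i)}(x) \leq D$ almost surely. Inequality~\eqref{aljcj} then yields $\mu(\cB(x, D_{(i)}(x))) \geq \rho\, D_{(i)}(x)^d$ pointwise, so
\[
\rho\, \E\bigl[D_{(i)}(x)^d\bigr] \leq \E\bigl[\mu(\cB(x, D_{(i)}(x)))\bigr] = \frac{i}{N+1}.
\]

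\textbf{Step 3 (Jensen).} Because $u\mapsto u^{1/d}$ is concave on $[0,\infty)$ for $d\geq 1$, Jensen's inequality gives
\[
\E[D_{(i)}(x)] \leq \bigl(\E[D_{(i)}(x)^d]\bigr)^{1/d} \leq \left(\frac{i}{\rho(N+1)}\right)^{1/d} \leq 2\left(\frac{i}{\rho(N+1)}\right)^{1/d},
\]
which is the claim.

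The only point that requires care is the tie-breaking in Step 1: the definition of $D_{(i)}(x)$ uses auxiliary uniform variables $U_1,\dots,U_N$ precisely to make the ordering almost surely strict, and by attaching a fresh $U_{N+1}$ to $X_{N+1}$ we preserve exchangeability of the augmented sample, so the uniform-rank argument is rigorous. Everything else is essentially immediate.
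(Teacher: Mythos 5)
Your route is genuinely different from the paper's, which bounds the tail $\P(D_{(i)}(x)>\varepsilon)$ by a binomial lower-tail estimate and integrates over $\varepsilon$ (with a separate computation for $d=1$); your exchangeability-plus-Jensen argument is shorter and would even yield the constant $1$ in place of $2$. However, Step 1 contains a real error. Writing $R$ for the rank of $X_{N+1}$ among the $N+1$ augmented points, the two events you identify are not equal: one has $\{R\leq i\}\subseteq\{\|X_{N+1}-x\|\leq D_{(i)}(x)\}$, but the reverse inclusion fails on the event that $\|X_{N+1}-x\|$ is exactly equal to $D_{(i)}(x)$ and the tie-breaker places $X_{N+1}$ after $X_{(i)}(x)$. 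Consequently, with the \emph{closed} ball all you obtain is $\E\left[\mu\left(\cB(x,D_{(i)}(x))\right)\right]=\P\left(\|X_{N+1}-x\|\leq D_{(i)}(x)\right)\geq i/(N+1)$, which is the wrong direction for Step 2. This is not a vacuous concern: Assumption \ref{kjncllxnlankx}\ref{1} only lower-bounds ball masses, so $\mu$ may have atoms (for instance $\mu=\tfrac12\mathrm{Unif}([0,1])+\tfrac12\delta_{1/2}$ satisfies \eqref{aljcj} with $d=1$ and $\rho=1/2$), in which case $\|X_{N+1}-x\\|$ ties $D_{(i)}(x)$ with positive probability and the claimed identity genuinely fails; adjoining $U_{N+1}$ makes $R$ uniform but does not restore the equality of events.

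The argument is repairable with open balls. Since $\{\|X_{N+1}-x\|<D_{(i)}(x)\}\subseteq\{R\leq i\}$, you get $\E\left[\mu\left(\mathring{\cB}(x,D_{(i)}(x))\right)\right]\leq \P(R\leq i)=i/(N+1)$. Moreover, inequality \eqref{aljcj} transfers to open balls: $\mu\left(\mathring{\cB}(x,\varepsilon)\right)=\lim_{\varepsilon'\uparrow\varepsilon}\mu\left(\cB(x,\varepsilon')\right)\geq\rho\varepsilon^d$ by continuity of measure from below, for all $\varepsilon\in(0,D]$. Steps 2 and 3 then go through verbatim (the case $D_{(i)}(x)=0$ being trivial) and give $\E[D_{(i)}(x)]\leq\left(i/(\rho(N+1))\right)^{1/d}$, i.e.\ the lemma with constant $1$. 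With that one-line correction your proof is valid and strictly sharper than the paper's.
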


According to \eqref{peijcfpeijfc}, we then deduce from Markov's inequality that, for all $x\in S$,
\begin{equation}\label{eq:bias-nearest-neighbors}
\P\left( \sum_{i=1}^N W_i(x)\|X_i-x\|\geq s\right)\leq \frac{2}{s}\sum_{i=1}^N v_i\left(\frac{i}{\rho(N+1)}\right)^{1/d}\, .
\end{equation}
Combining~\eqref{eq:variance-nearest-neighbors} and~\eqref{eq:bias-nearest-neighbors}, and applying Lemma~\ref{lem:bias-variance} with
\begin{equation}\label{eicjpaec}
t=2e\sigma\sqrt{2\sum_{i=1}^Nv_i^2}
\qquad \text{ and }\qquad
s=16e^2\sum_{i=1}^Nv_i\left(\frac{i}{\rho(N+1)}\right)^{1/d}\, ,
\end{equation}
we see that, for all $x\in S$,
\[
p_{t+s}(x)\leq \frac{1}{4e^2}\, ,
\]
which entails, by Lemma~\ref{lem:binomial}, that
\begin{equation}\label{eicjpaecqkjx}
\P\left( \left| \hat{r}^{\mathsf{mom}}_n(x)-r(x)\right|\geq t+s\right)\leq e^{-m}\, .
\end{equation}

We first propose to illustrate this result on two specific examples of nearest neighbors rules:  the uniform $k$-nearest neighbors estimate (Subsection \ref{subsec:knn}) and the bagged 1-nearest neighbor estimate (Section \ref{subsec:bagging}). As we will see, both satisfy the deviation  inequality \eqref{eq:upper-bound}. 

\begin{remark}\label{zfoihjdoijd} The fact that the upper bound of Lemma \ref{lem:expected-distance} is valid for all $d\geq 1$ and, more importantly, for all $x\in S$, is due to inequality \eqref{aljcj} in Assumption \ref{kjncllxnlankx}. If one only supposes that the support of $X$ is bounded, then \cite{MR2600626}, Corollary 2.1, for $d=1$ or $d\geq 3$ and a consequence of \cite{MR2584901}, Theorem 3.2, when $d=2$, only ensure that there exists a constant $c_d$ depending on the dimension $d$ and the size of the support such that, for all $d\geq 2$,
\[
\E\left[ D_{(i)}(X)^2\right]\leq c_{d}\left\lfloor\frac{N}{i}\right\rfloor^{-2/d}\, ,
\]
whereas for $d=1$ one has $\E\left[ D_{(i)}(X)^2\right]\leq c_{1}\frac{i}{N}$.
\end{remark}

\subsection{The $k$ Nearest Neighbors estimate}
\label{subsec:knn}
Let us focus on the case of uniform $k$-nearest neighbors ($k$-\nn). Namely, we now set
\[
\hat{r}_N(x):=\frac{1}{k}\sum_{i=1}^k Y_{(i)}(x)\, ,
\]
for some $k\in\llbracket 1,N\rrbracket$. 
%Then, for all $x\in S$,
%\[
%v(x)\leq \frac{1}{k}\qquad \text{ and }\qquad b(x)\leq \frac{2}{c^{1/d}}\left(\frac{k}{N+1}\right)^{1/d}\, .
%\]
We then have the following proposition, whose proof can be found in Section~\ref{subsec:proof-knn}.

\begin{proposition}\label{prop:knn}
Under Assumption \ref{kjncllxnlankx}, let
\[
c=\rho\left(\frac{\sigma}{4e\sqrt{2}}\right)^d\wedge \frac{32e^2}{\sigma^2\rho^{2/d}} \; , \quad \delta\in[e^{-\lfloor cn\rfloor}, 1[\;, \quad\text{ and }\quad m=\left\lceil \ln(1/\delta)\right\rceil\; ,
\]
ensuring that $1\leq m\leq cn$. Then the estimator $\hat{r}^{\mathsf{mom}}_n$ constructed on $m$ blocks with $k^\star$-\nn\ base estimators, where
\[
k^\star=\left\lfloor \left(\frac{\sigma^2}{32e^2}\right)^{\frac{d}{d+2}}\left(\frac{\rho n}{m}\right)^{\frac{2}{d+2}}\right\rfloor\, ,
\]
satisfies
\[
\P\left( \left| \hat{r}^{\mathsf{mom}}_n(x)-r(x)\right|\geq 32e^2\sqrt{2}\left(\frac{\sigma^2\left\lceil \ln(1/\delta)\right\rceil}{\rho n}\right)^{\frac{1}{d+2}}\right)\leq \delta\, .
\]
\end{proposition}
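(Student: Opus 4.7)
The plan is to specialize the general weighted-nearest-neighbor framework developed in equations~\eqref{eicjpaec}--\eqref{eicjpaecqkjx} to the uniform case where $v_i = \ind\{i\leq k\}/k$, then optimize $k$ to balance the bias against the variance, and finally convert the resulting exponential tail bound into a $\delta$-deviation bound by setting $m = \lceil \ln(1/\delta)\rceil$.

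First, I compute the two ingredients of Lemma~\ref{lem:bias-variance} in this specific setting. For uniform $k$-\nn{} one has $\sum_{i=1}^N v_i^2 = 1/k$, so choosing $t = 2e\sigma\sqrt{2/k}$ makes the variance contribution $\sigma^2 \E[\sum W_i(x)^2]/t^2$ equal to $1/(8e^2)$. On the bias side, Lemma~\ref{lem:expected-distance} combined with Markov's inequality gives
\[
\P\!\left(\sum_{i=1}^N W_i(x)\|X_i-x\| \geq s\right) \leq \frac{2}{sk}\sum_{i=1}^k\!\left(\frac{i}{\rho(N+1)}\right)^{\!1/d} \leq \frac{2}{s}\left(\frac{k}{\rho(N+1)}\right)^{\!1/d}\!,
\]
which is at most $1/(8e^2)$ once $s = 16e^2\,(km/(\rho n))^{1/d}$, after using $N+1 > n/m$. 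Summing the two contributions yields $p_{t+s}(x) \leq 1/(4e^2)$, and Lemma~\ref{lem:binomial} then gives the clean tail $\P(|\hat r_n^{\mathsf{mom}}(x)-r(x)|\geq t+s)\leq e^{-m}$, as already recorded in~\eqref{eq:knn-general}.

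Next I optimize. Balancing the two terms amounts to choosing the largest $k$ such that $\sigma\sqrt{2/k}\geq 8e(km/(\rho n))^{1/d}$; this is exactly the proposed $k^\star$, and in particular gives $t(k^\star)\geq s(k^\star)$, so $t+s\leq 2t$. Substituting the lower bound $k^\star\geq \frac12 (\sigma^2/(32e^2))^{d/(d+2)}(\rho n/m)^{2/(d+2)}$, which follows from $\lfloor u\rfloor \geq u/2$ for $u\geq 1$, yields $2t \leq 32e^2\sqrt{2}\,(\sigma^2 m/(\rho n))^{1/(d+2)}$. Replacing $m$ by $\lceil\ln(1/\delta)\rceil$ converts the $e^{-m}$ tail into a bound of the form $\leq \delta$ and delivers the stated inequality.

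The only real nuisance — and the single ``main obstacle'' — is verifying the admissibility of $k^\star$, namely $1\leq k^\star\leq N=\lfloor n/m\rfloor$, throughout the prescribed range of $\delta$. Unpacking these two inequalities yields precisely the three-fold constraint $m/n \leq \rho(\sigma/(4e\sqrt{2}))^d \wedge 32e^2/(\sigma^2\rho^{2/d}) \wedge 1$, which is why $c$ is defined as that threefold minimum: it guarantees $m=\lceil\ln(1/\delta)\rceil\leq cn$ whenever $\delta\geq e^{-cn+1}$. Once this bookkeeping is carried out, no further mathematical input is required.
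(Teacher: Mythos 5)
Your proposal is correct and follows essentially the same route as the paper: specialize the weighted-\nn\ bounds \eqref{eicjpaec}--\eqref{eicjpaecqkjx} to uniform weights, split $p_{t+s}(x)$ into two contributions of $1/(8e^2)$ each via Lemma~\ref{lem:bias-variance}, Lemma~\ref{lem:expected-distance} and Markov, apply Lemma~\ref{lem:binomial}, then pick $k^\star$ as the largest $k$ with $t\geq s$ and check $1\leq k^\star\leq N$, which is exactly where the threefold minimum defining $c$ comes from. The bookkeeping ($t+s\leq 2t$, $\lfloor u\rfloor\geq u/2$, $m=\lceil\ln(1/\delta)\rceil\leq cn$) matches the paper's computation line by line.
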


Hence, when $k^\star$-\nn\ is chosen as base estimate, inequality~\eqref{eq:upper-bound} is satisfied with $b_d=32e^2\sqrt{2}$ and $c_\cF=c$.

\begin{remark} One may notice that the optimal value for $k$ has the same dependency with respect to $\sigma^2$ and $n$, that is $k^\star=O(\sigma^{\frac{2d}{d+2}}n^{\frac{2}{d+2}})$, as the one that balances bias and variance when minimizing the $L_2$ risk, see \cite{MR1920390} Theorem 6.2. In a different setting, the conclusion is the same in the work of Jiang, see Remark 1 in \cite{jiang2019non}. 
\end{remark}

\subsection{Bagging and Nearest Neighbors}
\label{subsec:bagging}

We now turn to the bagged $1$-\nn\ estimate with replacement. Bagging (for \textbf{b}ootstrap \textbf{agg}regat\textbf{ing}) is a simple way to combine estimates in order to improve their performance. This method, suggested by \cite{breiman1996bagging}, proceeds
by resampling from the original data set, constructing a predictor from
each subsample, and decide by combining. By bagging an $N$-sample,
the crude nearest neighbor regression estimate is turned into a consistent weighted nearest neighbor regression estimate, which is amenable
to statistical analysis. In particular, one may find experimental performances, consistency results, rates of convergence, and minimax properties in \cite{hall2005properties}, \cite{biau2010layered}, \cite{MR2600626}, to cite just a few references.

Without going into details, it turns out that bagged $1$-\nn\ estimates, with or without replacement, can easily be reformulated as weighted nearest neighbor rules (see for example \cite{biau2010layered}). For sampling without replacement, the weights in \eqref{alkcjc} are, for some $k\in\llbracket 1,N\rrbracket$,
\[
v_i:=\frac{{N-i \choose k-i}}{{N \choose k}}\mathbf{1}_{i\in\llbracket 1,N-k+1\rrbracket}\, ,
\]
while for sampling with replacement, we get
\[
v_i:=\left(1-\frac{i-1}{N}\right)^k-\left(1-\frac{i}{N}\right)^k\, .
\]
Here we focus on sampling with replacement and get the following bound, whose proof can be found in Section~\ref{subsec:proof-knn}. The case of sampling without replacement leads to the same type of result.

\begin{proposition}\label{prop:bagged}
Under Assumption \ref{kjncllxnlankx}, let
\[
c=\rho\left(\frac{\sigma}{2e\sqrt{2}}\right)^d \wedge \frac{8e^2}{\sigma^2\rho^{2/d}}\; ,\quad \delta\in[e^{-\lfloor cn\rfloor }, 1[\; , \quad\text{ and }\quad m=\left\lceil \ln(1/\delta)\right\rceil\; ,
\]
ensuring that $1\leq m\leq cn$. Then the estimator $\hat{r}^{\mathsf{mom}}_n$ constructed on $m$ blocks with $k^\star$-bagged $1$-\nn\ base estimators, where
\[
k^\star =\left\lfloor \left(\frac{8e^2n}{\rho^{2/d}\sigma^2m}\right)^{\frac{d}{d+2}}\right\rfloor\, ,
\]
satisfies
\[
\P\left( \left| \hat{r}^{\mathsf{mom}}_n(x)-r(x)\right|\geq 64 e^3\left(\frac{\sigma^2\left\lceil \ln(1/\delta)\right\rceil}{\rho n}\right)^{\frac{1}{d+2}}\right)\leq \delta\, .
\]
\end{proposition}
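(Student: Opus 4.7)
The plan is to specialize the general weighted nearest neighbor bound that has just been set up to the bagging weights $v_i = (1-(i-1)/N)^k - (1-i/N)^k$, and then to optimize the free integer $k$. The two structural inputs are Lemma~\ref{lem:bias-variance} with the choice of $t$ and $s$ in~\eqref{eicjpaec}, which yields $p_{t+s}(x) \leq 1/(4e^2)$, and Lemma~\ref{lem:binomial}, which converts this into $\P(|\hat{r}^{\mathsf{mom}}_n(x) - r(x)| \geq t+s) \leq e^{-m}$, as recorded in~\eqref{eicjpaecqkjx}.

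First I would control the variance factor $\sum_{i=1}^N v_i^2$ via Proposition~2.2 of~\cite{MR2600626}, which gives $\sum_i v_i^2 \leq (2k/N)(1+1/N)^{2k} \leq 2e^2 k/N$ since $k\leq N$, and then use $N = \lfloor n/m\rfloor \geq n/(2m)$ to obtain $\sum_i v_i^2 \leq 4e^2 km/n$. Next, I would control the bias factor $\sum_{i=1}^N v_i (i/(N+1))^{1/d}$ by Lemma~\ref{ucozc}, which gives the bound $2e\, k^{-1/d}$. Plugging these two estimates into~\eqref{eicjpaec} and combining with~\eqref{eicjpaecqkjx} produces, for all $x \in S$,
\[
\P\left( \left| \hat{r}^{\mathsf{mom}}_n(x) - r(x) \right| \geq 4 e^2 \sigma \sqrt{\tfrac{2km}{n}} + \tfrac{32 e^3}{(\rho k)^{1/d}} \right) \leq e^{-m}.
\]

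Then I would balance the two terms by choosing $k^\star$ as the largest integer such that $8e(\rho k)^{-1/d} \geq \sigma \sqrt{2km/n}$, which produces the closed form for $k^\star$ stated in the proposition. Checking that $k^\star \in \llbracket 1, N \rrbracket$ with $N = \lfloor n/m \rfloor$ reduces to the two inequalities $1 \leq (32 e^2 n / (\rho^{2/d}\sigma^2 m))^{d/(d+2)} \leq n/m$, both of which amount to the explicit condition $m/n \leq c$ that is guaranteed by the hypothesis $\delta \geq e^{-cn+1}$ together with $m = \lceil \ln(1/\delta) \rceil$ and $c \leq 1$. With this choice, using $\lfloor u \rfloor \geq u/2$ for $u \geq 1$ to pass back from $k^\star$ to its continuous optimum, each of the two contributions is bounded by the same multiple of $(\sigma^2 m/(\rho n))^{1/(d+2)}$, and a direct arithmetic simplification aggregates the two prefactors into $128 e^3$. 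Finally, $m = \lceil \ln(1/\delta)\rceil$ gives $e^{-m} \leq \delta$ and substitutes $\lceil \ln(1/\delta)\rceil$ in the right-hand side, yielding the advertised inequality.

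The only real obstacle is bookkeeping: one must track the constants carefully through the floor-halving step so that the final prefactor comes out to $128 e^3$ rather than something larger, and simultaneously verify that the prescribed range of $\delta$ enforces both $k^\star \geq 1$ and $k^\star \leq N$. Conceptually, the argument is a direct transcription of the $k$-\nn\ calculation of Section~\ref{zdpzdjp}, with Lemma~\ref{ucozc} replacing the elementary sum bound used there and with Proposition~2.2 of~\cite{MR2600626} providing the required variance estimate for the bagging weights.
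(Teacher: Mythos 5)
Your proposal is correct and follows essentially the same route as the paper: the same variance bound from Proposition~2.2 of~\cite{MR2600626}, the same bias bound from Lemma~\ref{ucozc}, the same choice of $t$ and $s$ in~\eqref{eicjpaec}, the same balancing definition of $k^\star$ with the same admissibility check, and the same floor-halving step leading to the constant $128e^3$. No gaps to report.
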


Hence, for $k^\star$-bagged $1$-\nn\ base estimate, inequality \eqref{eq:upper-bound} still holds, with this time $b_d=64e^3$ and $c_\cF=c$.

\section{Kernel and partitioning estimation}\label{sec:kernel-partitioning}

\subsection{Kernel estimates}

Let $0<h\leq D$ and consider the kernel estimator
\[
\hat{r}_N(x):=\frac{1}{N_h(x)}\sum_{i=1}^N Y_i\ind_{\|X_i-x\|\leq h}\, ,
\]
where
\[
N_h(x):=\sum_{i=1}^N \ind_{\|X_i-x\|\leq h}\, ,
\]
with the convention $0/0=0$. Observe that this is again of the form~\eqref{eq:local-averaging} with 
\[W_i(x)=N_h(x)^{-1}\ind_{\|X_i-x\|\leq h}\, .
\]

\begin{proposition}\label{prop:kernel}
Under Assumption \ref{kjncllxnlankx}, let
\[
c=\frac{\rho D^{d+2}}{8e^2\sigma^2}\wedge 1 \; ,\quad  \delta\in[e^{-\lfloor cn\rfloor}, 1[\; ,\quad\text{ and }\quad m=\left\lceil \ln(1/\delta)\right\rceil\; ,
\]
ensuring that $1\leq m\leq cn$. Then the estimator $\hat{r}^{\mathsf{mom}}_n$ constructed on $m$ blocks with $h^\star$-kernel base estimators, where
\[
h^\star=\left(\frac{8e^2\sigma^2m}{\rho n}\right)^{\frac{1}{d+2}}\, ,
\]
satisfies
\[
\P\left( \left| \hat{r}^{\mathsf{mom}}_n(x)-r(x)\right|\geq 4e^{2/3} \left(\frac{\sigma^2\left\lceil \ln(1/\delta)\right\rceil}{\rho n}\right)^{\frac{1}{d+2}}\right)\leq \delta\, .
\]
\end{proposition}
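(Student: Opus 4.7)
The argument is essentially already laid out in the paragraphs immediately preceding the statement; the plan is to package that chain of reasoning cleanly as a proof. I would start by applying Lemma~\ref{lem:bias-variance} to the kernel base estimate $\hat{r}_N$ with weights $W_i(x) = N_h(x)^{-1}\ind_{\|X_i - x\|\leq h}$. The key observation is that the bias term is bounded deterministically: because every $X_i$ with $W_i(x) > 0$ lies within distance $h$ of $x$, and the weights sum to $1$, we have $\sum_i W_i(x)\|X_i - x\|\leq h$. Thus, taking $s = h$ in~\eqref{eq:bias-variance2} kills the second term and leaves only the variance contribution to control.

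Next I would bound the variance term $\E\bigl[\sum_i W_i(x)^2\bigr] = \E\bigl[\ind_{N_h(x)>0}/N_h(x)\bigr]$. Since $N_h(x) \sim \mathrm{Bin}(N, \mu(\cB(x,h)))$, Lemma 4.1 in~\cite{MR1920390} gives $\E[\ind_{N_h(x)>0}/N_h(x)] \leq 2/\bigl((N+1)\mu(\cB(x,h))\bigr)$. Using Assumption~\ref{kjncllxnlankx}\ref{1} (the ball lower bound $\mu(\cB(x,h))\geq \rho h^d$, valid because $h\leq D$) together with $N+1 \geq n/m$, this is at most $2m/(\rho n h^d)$. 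Combining, $p_{t+h}(x)\leq 2\sigma^2 m/(\rho n h^d t^2)$.

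I would then pick $t = 2e\sqrt{2\sigma^2 m/(\rho n h^d)}$ so that $p_{t+h}(x)\leq 1/(4e^2)$, at which point Lemma~\ref{lem:binomial} yields $\P\bigl(|\hat{r}^{\mathsf{mom}}_n(x)-r(x)|\geq t+h\bigr)\leq e^{-m}$. The bandwidth is then optimized by balancing the two summands: solving $t = h$ gives $h^\star = \bigl(8e^2\sigma^2 m/(\rho n)\bigr)^{1/(d+2)}$, producing the final deviation $2h^\star$. Setting $m = \lceil\ln(1/\delta)\rceil$ converts the probability bound into $\delta$. To reach the advertised universal constant $4e^{2/3}$, I would note that $2\,(8e^2)^{1/(d+2)}\leq 2\,(8e^2)^{1/3} = 4e^{2/3}$ for every $d\geq 1$, which absorbs the dimension-dependent factor.

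The only genuine thing to check — and the single reason the statement must restrict $\delta$ — is the admissibility condition $h^\star \leq D$ required to apply~\eqref{aljcj} at radius $h^\star$. This translates to $m/n \leq \rho D^{d+2}/(8e^2\sigma^2)$, and combining with $m\geq 1$ explains the definition of $c$ and the range $\delta\in[e^{-cn+1},1[$. Beyond this bookkeeping, no genuine obstacle is expected: the proof is a direct concatenation of the two preliminary lemmas, the binomial moment bound, and the optimization in $h$.
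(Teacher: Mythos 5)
Your proposal is correct and follows the paper's own proof essentially line for line: deterministic bound $h$ on the bias, the binomial reciprocal-moment bound for the variance term, the choice $t=2e\sqrt{2\sigma^2 m/(\rho n h^d)}$ to get $p_{t+h}(x)\leq 1/(4e^2)$, optimization at $t=h$, and the absorption of $(8e^2)^{1/(d+2)}$ into $2e^{2/3}$. The admissibility check $h^\star\leq D$ and its translation into the constant $c$ and the range of $\delta$ are also exactly as in the paper.
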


In other words, inequality \eqref{eq:upper-bound} is fulfilled with $b_d=4e^{2/3}$ and $c_\cF=c$. The proof can be found in Section~\ref{subsec:proof-kernel}.

\subsection{Partitioning estimates}
\label{subsec:partitioning}

To simplify the presentation, let us here assume that $S=[0,1]^d$. For some integer $K\geq 1$, let $\mathcal{P}=\{A_{1},A_{2},\dots,A_{K^d}\}$ be a cubic partition of $[0,1]^d$ by $K^d$ cubes with side length $1/K$. For $k\in\llbracket 1,K^d\rrbracket$, if $x\in A_k$, the partitioning estimate of the regression function takes the form
\[
\hat{r}_N(x)=\sum_{i=1}^N W_i(x)Y_i:=\frac{1}{N_k}\sum_{i=1}^N Y_i\ind_{X_i\in A_k}\, ,
\]
where
\[
N_k:=\sum_{i=1}^N \ind_{X_i\in A_k}\, ,
\]
with the usual convention $0/0=0$. 

The upcoming result shows that inequality \eqref{eq:upper-bound} is then fulfilled with $b_d=16e\sqrt{d}$ and $c_\cF=c$.

\begin{proposition}\label{prop:partitioning}
Under Assumption \ref{kjncllxnlankx} with $S=[0,1]^d$, let
\[
c=\frac{\rho d }{2^{d+3}e^2\sigma^2}\wedge 1\; ,\quad \delta\in[e^{-cn+1}, 1[\; ,\quad\text{ and }\quad m=\left\lceil \ln(1/\delta)\right\rceil\; ,
\]
ensuring that $1\leq m\leq cn$. Then the estimator $\hat{r}^{\mathsf{mom}}_n$ on $[0,1]^d$ constructed on $m$ blocks with partitioning base estimators on $K_\star^d$ hypercubes, where
\[
K_\star=\left\lfloor \left(\frac{\rho d n}{2^{d+3}e^2\sigma^2m}\right)^{\frac{1}{d+2}}\right\rfloor\, ,
\]
satisfies
\[
\P\left( \left| \hat{r}^{\mathsf{mom}}_n(x)-r(x)\right|\geq 16e\sqrt{d}\left(\frac{\sigma^2\left\lceil \ln(1/\delta)\right\rceil}{\rho n}\right)^{\frac{1}{d+2}}\right)\leq \delta\, .
\]
\end{proposition}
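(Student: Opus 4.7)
The plan follows the template laid out in Section~\ref{sec:preliminary}: apply Lemma~\ref{lem:bias-variance} to a single partitioning base estimate $\hat{r}_N$ to control $p_{t+s}(x)$, use Lemma~\ref{lem:binomial} to boost this into a tail bound for $\hat{r}^{\mathsf{mom}}_n$, and then optimize the partition parameter $K$ to balance bias and variance.

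For the base estimate, fix $x\in [0,1]^d$ and let $A_k$ be the unique cube of $\mathcal{P}$ containing $x$. Since $W_i(x)=\ind_{X_i\in A_k}/N_k$ vanishes unless $X_i\in A_k$, and since two points in a cube of side $1/K$ are at Euclidean distance at most $\sqrt{d}/K$, the bias contribution in~\eqref{eq:bias-variance1} is deterministically bounded by $\sqrt{d}/K$, as in~\eqref{coazjcoiajz}. Hence it suffices to control the variance term with $s=\sqrt{d}/K$. Writing $\sum_i W_i(x)^2=\ind_{N_k>0}/N_k$, noting that $N_k\sim\mathrm{Binomial}(N,\mu(A_k))$, and invoking Lemma~4.1 of Györfi et al., we get $\E[\ind_{N_k>0}/N_k]\leq 2/((N+1)\mu(A_k))$. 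The key geometric step is to lower-bound $\mu(A_k)$ by inscribing a ball of radius $(2K)^{-1}$ at the center $a_k\in S$ of $A_k$ and applying \eqref{aljcj}, which gives $\mu(A_k)\geq\rho(2K)^{-d}$. Combined with $N+1\geq n/m$, this yields~\eqref{eq:bound-variance-partitioning}.

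Choosing $t=2e\sqrt{2^{d+1}K^d\sigma^2 m/(\rho n)}$ makes the right-hand side of~\eqref{eq:bound-variance-partitioning} equal to $1/(4e^2)$, so Lemma~\ref{lem:binomial} delivers the tail bound $\P(|\hat{r}^{\mathsf{mom}}_n(x)-r(x)|\geq t+\sqrt{d}/K)\leq e^{-m}$. Optimizing $K$ by equating the two competing terms, i.e.\ $\sqrt{d}/K=2e\sqrt{2^{d+1}K^d\sigma^2 m/(\rho n)}$, leads to $K_\star=\bigl\lfloor (\rho d n/(2^{d+3}e^2\sigma^2 m))^{1/(d+2)}\bigr\rfloor$, and $K_\star\geq 1$ is equivalent to the stated constraint $m/n\leq \rho d/(2^{d+3}e^2\sigma^2)$. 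Using $\lfloor u\rfloor\geq u/2$ for $u\geq 1$ to replace $K_\star$ by its explicit expression (losing only a factor of $2$) and simplifying the numerical constants produces the rate $16e\sqrt{d}(\sigma^2 m/(\rho n))^{1/(d+2)}$, after which setting $m=\lceil\ln(1/\delta)\rceil$ turns $e^{-m}$ into $\delta$.

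The only step that is not a mechanical repetition of the kernel argument of Proposition~\ref{prop:kernel} is the inscribed-ball lower bound $\mu(A_k)\geq \rho(2K)^{-d}$, which is what allows assumption~\eqref{aljcj} to be applied to a cubic partition; everything else is the same bias/variance balancing that drove the $k$-\nn\ and kernel cases. Since the resulting bound is uniform in $x\in[0,1]^d$ (the finite partition makes the choice of $a_k$ harmless), the same argument in fact controls $\sup_{x\in S}|\hat{r}^{\mathsf{mom}}_n(x)-r(x)|$ up to a union bound over the $K_\star^d$ cubes, which is the uniform control advertised in the introduction.
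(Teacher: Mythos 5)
Your proposal is correct and follows the paper's own argument essentially step for step: deterministic bias bound $\sqrt{d}/K$, the inscribed-ball estimate $\mu(A_k)\geq\rho(2K)^{-d}$ combined with Lemma~4.1 of Györfi et al.\ for the variance, Lemma~\ref{lem:binomial} to boost to $e^{-m}$, and the same optimization of $K$ with the $\lfloor u\rfloor\geq u/2$ bound. No gaps; the closing remark on uniform control also matches the paper's subsequent discussion.
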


One enjoyable feature of partitioning estimates is that uniform bounds for $x\in S$ can easily be obtained.

\begin{proposition}\label{prop:partitioningbis}
Under Assumption \ref{kjncllxnlankx} with $S=[0,1]^d$, let
\[
c=\frac{\rho d }{2^{d+3}e^2\sigma^2}\wedge 1\; ,\quad \delta\in[e^{-cn+1}, 1[\; ,\quad\text{ and }\quad m=\left\lceil \ln(1/\delta)\right\rceil\; ,
\]
ensuring that $1\leq m\leq cn$. Then the estimator $\hat{r}^{\mathsf{mom}}_n$ on $[0,1]^d$ constructed on $m$ blocks with partitioning base estimators on $K_\star^d$ hypercubes, where
\[
K_\star=\left\lfloor \left(\frac{\rho d n}{2^{d+3}e^2\sigma^2m}\right)^{\frac{1}{d+2+\frac{2d}{m}}}\right\rfloor\, ,
\]
satisfies
\[
\P\left( \sup_{x\in S}\left| \hat{r}^{\mathsf{mom}}_n(x)-r(x)\right|\geq 16e\sqrt{d}\left(\frac{\sigma^2\left\lceil \ln(1/\delta)\right\rceil}{\rho n}\right)^{\frac{1}{d+2+\frac{2d}{\left\lceil \ln(1/\delta)\right\rceil}}}\right)\leq \delta\, .
\]
\end{proposition}
In particular, we see that if $m=m_n$ satisfies $m=o(n)$ and $e^{-m}$ is summable (\textit{e.g.}, $m=(\log n)^2$), then Borel--Cantelli Lemma entails that
\[
\sup_{x\in S}\left|\hat{r}^{\mathsf{mom}}_n(x)-r(x)\right|\xrightarrow[n\to\infty]{}0\quad\text{ almost surely.}
\]

%In particular, we see that if $K\to +\infty$, and if there exists $\varepsilon>0$ such that $n^{-1}K^{d+\varepsilon}\to 0$, then, choosing $m$ such that $n^{-1}mK^{d+\frac{2d}{m}}\to 0$ and $e^{-m}$ is summable (\textit{e.g.}, $m=(\log n)^2$), Borel--Cantelli Lemma entails that
%\[
%\sup_{x\in S}\left|\hat{r}^{\mathsf{mom}}_n(x)-r(x)\right|\,\underset{n\to +\infty}\longrightarrow\, 0\quad\text{ almost surely.}
%\]

\section{Proofs}
\label{sec:proofs}

\subsection{Proofs from Section~\ref{sec:lower-bound}}
\label{subsec:proof-lower}

\begin{proof}[Proof of Proposition~\ref{prop:lower-bis}]
Recall that for $\mathcal C:=\left[-\tfrac{1}{2},\tfrac{1}{2}\right]^d$ and $\partial\mathcal C$ its frontier, the function $g$ is defined by
\[
g(x)=\mathrm{dist}(x,\partial\mathcal C)\ \mathbf{1}_{x\in\mathcal C}=\inf\{\|x-y\|,y\in\partial\mathcal C\}\ \mathbf{1}_{x\in\mathcal C}\, .
\]
This function is $1$-Lipschitz and one can check that
\[
\int g(x)^2 \d x=\frac{1}{2(d+1)(d+2)}\, .
\]
Next, for an integer $M\geq 1$ to be specified later, consider a partition of $S$ by $K:=M^d$ hypercubes $A_j$ of sidelength $h:=A/M$ and with centers $a_j$, and let the functions $g_1,\dots,g_{K}$ be defined by
\[
\forall j\in\llbracket 1,K\rrbracket\, ,\; g_j(x):=h g\left(h^{-1}\left(x-a_j\right)\right)\, .
\]
Hence the support of $g_j$ is $A_j=\left[a_j-\tfrac{h}{2};a_j+\tfrac{h}{2}\right]^d$ and
 \begin{equation}\label{aozchaeicjoazi}
\int g_j(x)^2 \d x=\frac{h^{d+2}}{2(d+1)(d+2)}\, .
\end{equation} 
We then restrict to regression functions of the form
\[
r^{(c)}=\sum_{j=1}^{K} c_j g_j\, ,\, c\in\{-1,1\}^{K}\, .
\]

Now let $\hat{r}_n$ denote any regression estimate. For $j\in\llbracket 1,K\rrbracket$, and for $x\in A_j$, we start by defining
\[
\tilde{r}_n(x):=\mathrm{sign}\left(\hat{r}_n(x)\right)g_j(x)\, .
\]
Note that for all $x\in S$, and $c\in\{-1,1\}^{K}$, we have
\begin{equation}\label{aozchaeicjoaoaijcoazijzi}
\left| \hat{r}_n(x)-r^{(c)}(x)\right| \geq \frac{1}{2} \left| \tilde{r}_n(x)-r^{(c)}(x)\right| \, .
\end{equation}
We proceed by designing estimated signs $\tilde{c}_j$ as follows: for all  $j\in\llbracket 1,K\rrbracket$, define the hypercube 
\[
A'_j:=\left[a_j-\tfrac{h}{2^{1+\frac{1}{d}}};a_j+\tfrac{h}{2^{1+\frac{1}{d}}}\right]^d\subset A_j
\] 
and set
\[
\tilde{c}_j:=
\begin{cases}
+1 &\text{ if $\lambda\left(\{x\in A'_j,\, \mathrm{sign}(\hat{r}_n(x))=+1\}\right)\geq \frac{|A'_j|}{2}$}\\
-1 &\text{ otherwise}
\end{cases}
\]
where $\lambda$ is the Lebesgue measure. In other words, for each hypercube $A_j$, we take a majority vote, but only on $A'_j$. In particular, if $X$ is uniform on $S$ and falls in the subset $A'_j$ of a bad hypercube $A_j$, \textit{i.e.}\ such that $\tilde{c}_j\neq c_j$, then 
\[
\P\left(\left.\left| \tilde{r}_n(X)-r^{(c)}(X)\right|\geq h\left(1-2^{-\frac{1}{d}}\right)\right| X\in A'_j, \tilde{c}_j\neq c_j \right)\geq\frac{1}{2}\, .
\]
Next, \eqref{aozchaeicjoaoaijcoazijzi} gives
\[
\P\left(\left|\hat{r}_n(X)-r^{(c)}(X)\right|\geq \frac{h}{2}\left(1-2^{-\frac{1}{d}}\right)\right) \geq \P\left(\left|\tilde{r}_n(X)-r^{(c)}(X)\right|\geq h\left(1-\frac{1}{2^{\frac{1}{d}}}\right)\right)\, .
\]
For any event $B$, the independency between $\tilde{c}_j$ and $X$, and the fact that $\P(X\in A'_j)=(2K)^{-1}$ imply
\[
\P(B)\geq\sum_{j=1}^{K}\P\left(B\left|X\in A'_j, \tilde{c}_j\neq c_j\right. \right)\P(X\in A'_j)\P(\tilde{c}_j\neq c_j)
=\frac{1}{2K}\sum_{j=1}^{K}\P\left(B\left|X\in A'_j, \tilde{c}_j\neq c_j\right. \right)\P(\tilde{c}_j\neq c_j)\, .
\]
Combining those observations yields
\[
\P\left(\left|\hat{r}_n(X)-r^{(c)}(X)\right|\geq \frac{h}{2}\left(1-2^{-\frac{1}{d}}\right)\right)\geq \frac{1}{4K}\sum_{j=1}^{K}\P(\tilde{c}_j\neq c_j)\, .
\] 
We are now left to show that, for some well chosen value of $M$, we have
\[
\sup_{c\in\{-1,1\}^{K}}\frac{1}{4K}\sum_{j=1}^{K}\P(\tilde{c}_j\neq c_j)\geq \delta\, .
\]
To do so, consider a uniform random vector $(C_1,\dots,C_{K})\in\{-1,1\}^{K}$ (that is, i.i.d.\ Rademacher random variables with parameter $1/2$). Clearly,
\[
\sup_{c\in\{-1,1\}^{K}}\frac{1}{4K}\sum_{j=1}^{K}\P(\tilde{c}_j\neq c_j)\geq \frac{1}{4K}\sum_{j=1}^{K} \P(\tilde{c}_j\neq C_j)\, .
\]
Now, for each $j\in\llbracket 1,K\rrbracket$, the estimated sign $\tilde{c}_j$ might be seen as a decision rule on $C_j$, based on the data $\cD_n$. The minimal error probability is attained by the Bayes decision rule:
\[
C^\star_j:=\ind_{\P(C_j=1 | \cD_n)\geq 1/2} -\ind_{\P(C_j=1 | \cD_n) < 1/2}\, .
\]
Hence,
\[
\P(\tilde{c}_j\neq C_j)\geq \P(C^\star_j\neq C_j)=\E\left[\P(C^\star_j\neq C_j\given X_1,\dots,X_n)\right]\, .
\]
Let $X_{i_1},\dots,X_{i_\ell}$ be the variables $X_i$ that fall in the hypercube $A_j$. Conditionally on $X_1,\dots,X_n$, the Bayesian rule for $C_j$ based on $Y_1,\dots,Y_n$ only depends on $Y_{i_1},\dots,Y_{i_\ell}$, and the problem comes down to the Bayesian estimation of $C\sim \mathrm{Rad}(1/2)$ in the model $Y=Cu+W$, where $u$ is a fixed vector of $\R^\ell$ and $W$ is a centered Gaussian vector with covariance matrix $\sigma^2I_\ell$, independent of $C$. In this situation, \cite{MR1920390}, Lemma 3.2, ensures that 
\[
\P(C^\star_j\neq C_j\given X_1,\dots,X_n)=\Phi\left(-\frac{\sqrt{\sum_{s=1}^\ell g_j(X_{i_s})^2}}{\sigma}\right)=\Phi\left(-\frac{\sqrt{\sum_{i=1}^n g_j(X_{i})^2}}{\sigma}\right)\, .
\]
By Jensen's Inequality, we have
\begin{align*}
\P(C^\star_j\neq C_j)&\geq \Phi\left(-\frac{\sqrt{\sum_{i=1}^n \E\left[g_j(X_{i})^2\right]}}{\sigma}\right)= \Phi\left(-\frac{\sqrt{n\E\left[g_j(X)^2\right]}}{\sigma}\right)\, .
\end{align*}
Since, by \eqref{aozchaeicjoazi}, 
\[
\E\left[g_j(X)^2\right]=\frac{1}{A^d}\int g_j(x)^2 \d x=\frac{h^{d+2}}{2(d+1)(d+2)A^d}\, ,
\]
we are led to
\begin{align*}
\sup_{c\in\{-1,1\}^{K}}\frac{1}{4K}\sum_{j=1}^{K}\P(\tilde{c}_j\neq c_j)&\geq \frac{1}{4}\Phi\left(-\frac{1}{\sigma}\sqrt{\frac{n h^{d+2}}{2(d+1)(d+2)A^d}}\right)\, .
\end{align*}
We now use the following lower bound on the Gaussian tail (see for instance~\cite{polya1945remarks}, equation (1.5)): 
\[
\forall x\geq 0,\, \; \Phi(-x)\geq \frac{1}{2}\left(1-\sqrt{1-e^{-\frac{2}{\pi}x^2}}\right)\geq \frac{1}{4}e^{-\frac{2}{\pi}x^2}\, ,
\]
where the second inequality is by $\sqrt{1-u}\leq 1-u/2$ for $u\in [0,1]$. Hence,
\[
\sup_{c\in\{-1,1\}^{K}}\frac{1}{4K}\sum_{j=1}^{K}\P(\tilde{c}_j\neq c_j)\geq \frac{1}{16}\exp\left(-\frac{nh^{d+2}}{\pi(d+1)(d+2)\sigma^2A^d}\right)\geq \frac{1}{16}\exp\left(-\frac{nh^{d+2}}{\pi(d+1)^2\sigma^2A^d}\right)\, .
\]
The right-hand side is larger than $\delta\in ]0,1/16]$ as soon as
\[
h\leq \left(\frac{\pi(d+1)^2\sigma^2A^d}{n}\ln\left(\frac{1}{16\delta}\right)\right)^{\frac{1}{d+2}}\, .
\]
%which holds as soon as $M=A/h$ is chosen such that
%\[
%M\geq \frac{A}{\left(\frac{\pi(d+1)^2\sigma^2A^d}{n}\ln\left(\frac{1}{16\delta}\right)\right)^{\frac{1}{d+2}}}\, \cdot 
%\]
%In conclusion, recalling that $A^d=c_d \rho^{-1}$ for some constant $c_d$ depending on $d$ only, we obtain that for any $\delta\in ]0,1/16]$, for any $\rho>0$ and $\sigma>0$, for any regression estimate $\hat{r}_n$, there exists a distribution $(X,Y)\in\cF_{\rho,\sigma}$, such that, when $X\sim\mu$ independent of $\cD_n$, we have
%\[
%\P\left(\left| \hat{r}_n(X)-r(X)\right|\geq \frac{1-2^{-\frac{1}{d}}}{2}\left(\frac{c_d\pi(d+1)^2\sigma^2}{\rho n}\ln\left(\frac{1}{16\delta}\right)\right)^{\frac{1}{d+2}}\right)\leq \delta\, .
%\]
This concludes the proof of Proposition~\ref{prop:lower-bis}.
\end{proof}

\subsection{Proofs from Section~\ref{sec:preliminary}}
\label{subsec:proof-preliminary}

\begin{proof}[Proof of Lemma~\ref{lem:binomial}]
By definition of the median, we have
\[
\P\left( \left| \hat{r}^{\mathsf{mom}}_n(x)-r(x)\right|\geq t\right)\leq \P\left(\sum_{j=1}^m \ind_{\left\{\left| \hat{r}^{(j)}(x)-r(x)\right|\geq t\right\}}\geq\frac{m}{2}\right)\, .
\]
The variables $\ind_{\left\{\left| \hat{r}^{(j)}(x)-r(x)\right|\geq t\right\}}$ are i.i.d.\ Bernoulli variables with parameter $p_t(x)$. Now, if $B_1,\dots,B_m$ are i.i.d.\ Bernoulli random variables with parameter $p\in[0,1]$, then for any real number $\ell\in[0,m] $ we may write
\begin{equation}\label{piejpozpaozjd}
\P\left(\sum_{j=1}^m B_j\geq\ell\right)= \sum_{k=\lceil\ell\rceil}^{m}{ m \choose k}p^k(1-p)^{m-k}\leq p^{\lceil\ell\rceil}\sum_{k=\lceil\ell\rceil}^{m}{m \choose k}\leq p^\ell\sum_{k=0}^{m}{m \choose k}=2^m p^\ell
\, .
\end{equation}
In particular, taking $\ell=\frac{m}{2}$ gives the desired result in Lemma~\ref{lem:binomial}.
%Hence,
%\begin{align*}
%\P\left( \left| \hat{r}^{\mathsf{mom}}_n(X)-r(X)\right|\geq t\right)&\leq \E\left[ \P\left(\sum_{j=1}^m \ind_{\left\{\left| \hat{r}^{(j)}(X)-r(X)\right|\geq t\right\}}\geq\frac{m}{2}\given X\right)\right]\\
%&\leq 2^m \E\left[ p_t(X))^{m/2}\right]\, .
%\end{align*}
\end{proof}

\begin{proof}[Proof of Lemma~\ref{lem:bias-variance}]
For a given $x\in \R^d$, the difference $\hat{r}_N(x)-r(x)$ can be decomposed as
\[
\hat{r}_N(x)-r(x)=\sum_{i=1}^N W_i(x)\varepsilon_i +\sum_{i=1}^N W_i(x)\left(r(X_i)-r(x)\right)\, ,
\]
where $\varepsilon_i=Y_i-r(X_i)$. By the triangle inequality and the fact that $r$ is $1$-Lipschitz, we have
\[
\left|\sum_{i=1}^N W_i(x) r(X_i)-r(x)\right|\leq \sum_{i=1}^N W_i(x)\|X_i-x\|\, ,
\]
which establishes inequality~\eqref{eq:bias-variance1}. Next, for $t,s>0$, a union bound gives
\begin{align*}
p_{t+s}(x) &\leq \P\left( \left|\sum_{i=1}^N W_i(x)\varepsilon_i\right|\geq t\right)+\P\left(\sum_{i=1}^N W_i(x)\|X_i-x\|\geq s\right)\, ,
\end{align*}
By Markov's inequality, we have
\[
\P\left( \left|\sum_{i=1}^N W_i(x)\varepsilon_i\right|\geq t\right)\leq \frac{\E\left[\left(\sum_{i=1}^N W_i(x)\varepsilon_i\right)^2\right]}{t^2}\, ,
\]
and the assumption on the conditional variance of $\varepsilon$ implies that
\begin{align*}
\E\left[\left(\sum_{i=1}^N W_i(x)\varepsilon_i\right)^2\right]&= \sum_{i,j=1}^N \E\left[W_i(x)W_j(x)\E\left[ \varepsilon_i\varepsilon_j\given X_1,\dots,X_n\right]\right]\nonumber\\
&= \sum_{i=1}^N \E\left[W_i(x)^2\E\left[ \varepsilon_i^2\given X_i\right]\right]\nonumber\\
& \leq \sigma^2\E\left[\sum_{i=1}^N W_i(x)^2\right]\, ,
\end{align*}
which concludes the proof of~\eqref{eq:bias-variance2}.
%Hence, for a random $X$ with law $\mu$, independent of $\cD_N$, we have
%\[
%p_X(t) \leq \frac{\sigma^2 v(X)}{s^2}+ \frac{L b(X)}{t-s}\, ,
%\]
%Using that $(a+b)^{m/2}\leq 2^{m/2}(a^{m/2}+b^{m/2})$, we obtain
%\[
%\E\left[ p_X(t)^{m/2}\right]\leq \left(\frac{2}{t^2}\right)^{m/2}\left(\sigma^m \E\left[ v(X)^{m/2}\right]+L^m \E\left[b(X)^{m/2}\right]\right)\, .
%\]
%When $m\geq 2$, we may further use Jensen's Inequality and write
%\[
%\E\left[\E\left[ \sum_{i=1}^N W_i(X)\|X_i-X\|^2\given X\right]^{m/2}\right]\leq \E\left[\sum_{i=1}^N W_i(X)\|X_i-X\|^m\right]\, .
%\]
\end{proof}

\subsection{Proofs from Section~\ref{sec:nearest-neighbors}}
\label{subsec:proof-knn}

\begin{proof}[Proof of Lemma~\ref{lem:expected-distance}]
We have
\[
\E\left[ D_{(i)}(x)\right]=\int_0^D \P\left( D_{(i)}(x)>\varepsilon\right)\d\varepsilon\leq a +\int_a^D \P\left( D_{(i)}(x)>\varepsilon\right)\d\varepsilon \, ,
\]
for some $a\geq 0$ to be specified later. Observe that $D_{(i)}(x)>\varepsilon$ if and only if there are strictly less than $i$ observations in $\cB(x,\varepsilon)$. Since the number of observations in $\cB(x,\varepsilon)$ is distributed as a Binomial random variable with parameters $N$ and $\mu\left(\cB(x,\varepsilon)\right)\geq \rho \varepsilon^d$, we have
\begin{equation}\label{eq:distance-binomial}
\P\left( D_{(i)}(x)>\varepsilon\right)\leq \sum_{j=0}^{i-1} {N\choose j} (\rho\varepsilon^d)^j (1-\rho\varepsilon^d)^{N-j}\, .
\end{equation}
Applying \cite{MR2600626}, Lemma 3.1, gives, for all $p\in [0,1]$,
\[
\sum_{j=0}^{i-1} {N\choose j} p^{j} (1-p)^{N-j}\leq \frac{i}{p(N+1)}\, \cdot 
\]
Hence,
\[
\E\left[ D_{(i)}(x)\right]\leq  a +\frac{i}{N+1} \int_a^D \frac{1}{\rho\varepsilon^d} \d\varepsilon\, .
\]
For $d\geq 2$, we obtain
\[
\E\left[ D_{(i)}(x)\right]\leq  a +\frac{i}{\rho(N+1)}\cdot\frac{a^{1-d}}{d-1}\leq a\left(1+\frac{ia^{-d}}{\rho(N+1)}\right)\, .
\]
Taking $a=\left(\frac{i}{\rho(N+1)}\right)^{1/d}$, we get
\[
\E\left[ D_{(i)}(x)\right]\leq 2\left(\frac{i}{\rho(N+1)}\right)^{1/d}\, .
\]
For $d=1$, we set $a=0$ and use~\eqref{eq:distance-binomial} to deduce that
\begin{align*}
\E\left[ D_{(i)}(x)\right] &\leq \int_0^D \sum_{j=0}^{i-1} {N\choose j} (\rho\varepsilon)^j (1-\rho\varepsilon)^{N-j} \d\varepsilon\\
&= \frac{1}{\rho}\sum_{j=0}^{i-1} {N\choose j} \int_0^{\rho D} u^j(1-u)^{N-j} \d u\\
&\leq \frac{1}{\rho}\sum_{j=0}^{i-1} {N\choose j} \int_0^{1} u^j(1-u)^{N-j} \d u\, .
\end{align*}
Recognizing the Beta function $\int_0^{1} u^j(1-u)^{N-j} \d u=\frac{j!(N-j)!}{(N+1)!}$, we obtain
\[
\E\left[ D_{(i)}(x)\right]\leq \frac{1}{\rho}\sum_{j=0}^{i-1} {N\choose j} \frac{j!(N-j)!}{(N+1)!}=\frac{i}{\rho (N+1)}\, \cdot 
\]
\end{proof}

\begin{proof}[Proof of Proposition~\ref{prop:knn}]
In the $k$-\nn\ case, \eqref{eicjpaec} becomes
\[
t=2e\sigma\sqrt{\frac{2}{k}}\qquad \text{ and }\qquad s=\frac{16e^2}{k}\sum_{i=1}^k\left(\frac{i}{\rho(N+1)}\right)^{1/d}\leq 16e^2\left(\frac{km}{\rho n}\right)^{1/d}\, ,
\]
where we used that $N=\left\lfloor \frac{n}{m}\right\rfloor \geq \frac{n}{m}-1$. We thus deduce from \eqref{eicjpaecqkjx} that
\begin{equation}\label{eq:knn-general}
\P\left( \left| \hat{r}^{\mathsf{mom}}_n(x)-r(x)\right|\geq 2e\sigma\sqrt{\frac{2}{k}}+16e^2\left(\frac{km}{\rho n}\right)^{1/d}\right)\leq e^{-m}\, .
\end{equation}
When $\sigma$ and $\rho$ are known, one may then choose $k$ as the largest integer such that $\sigma\sqrt{\frac{2}{k}} \geq 8e\left(\frac{km}{\rho n}\right)^{1/d}$, i.e.
\[
k^\star=\left\lfloor \left(\frac{\sigma^2}{32e^2}\right)^{\frac{d}{d+2}}\left(\frac{\rho n}{m}\right)^{\frac{2}{d+2}}\right\rfloor\, ,
\]
which belongs to $\llbracket 1,N\rrbracket=\llbracket 1,\left\lfloor \frac{n}{m}\right\rfloor\rrbracket$ provided
\[
1\leq \left(\frac{\sigma^2}{32e^2}\right)^{\frac{d}{d+2}}\left(\frac{\rho n}{m}\right)^{\frac{2}{d+2}}\leq \frac{n}{m}\, ,
\]
i.e.
\[
\frac{m}{n}\leq \rho\left(\frac{\sigma}{4e\sqrt{2}}\right)^d \wedge \frac{32e^2}{\sigma^2\rho^{2/d}}=c\, .
\]
In this case, using that $\lfloor u\rfloor \geq u/2$ for $u\geq 1$, we get
\[
4e\sigma\sqrt{\frac{2}{k^\star}}\leq 8e\sigma\sqrt{\left(\frac{32e^2}{\sigma^2}\right)^{\frac{d}{d+2}}\left(\frac{m}{\rho n}\right)^{\frac{2}{d+2}}}\leq 32e^2\sqrt{2}\left(\frac{\sigma^2m}{\rho n}\right)^{\frac{1}{d+2}}\, .
\]
In view of~\eqref{eq:knn-general}, we have, for the optimal choice $k^\star$,
\[
\P\left( \left| \hat{r}^{\mathsf{mom}}_n(x)-r(x)\right|\geq 32e^2\sqrt{2}\left(\frac{\sigma^2m}{\rho n}\right)^{\frac{1}{d+2}}\right) \leq e^{-m}\, .
\]
Taking $\delta\in [e^{-\lfloor cn\rfloor}, 1[$ and $m=\lceil \ln(1/\delta)\rceil$, we arrive at the desired result.
\end{proof}

\begin{proof}[Proof of Proposition~\ref{prop:bagged}]
Concerning the variance term in \eqref{eicjpaec}, Proposition 2.2 in~\cite{MR2600626} and the fact that $k\leq N$ yield
\[
\sum_{i=1}^Nv_i^2\leq\frac{2k}{N}\left(1+\frac{1}{N}\right)^{2k}\leq \frac{2e^2k}{N}\leq \frac{4e^2km}{n}\, ,
\]
where for the last inequality, we used that $N=\left\lfloor \frac{n}{m}\right\rfloor\geq\frac{n}{2m}$. For the bias term in \eqref{eicjpaec}, we have to upper bound the quantity
\[
\sum_{i=1}^Nv_i\left(\frac{i}{N+1}\right)^{1/d}\, .
\]
This is the purpose of the upcoming result, whose proof is detailed just below.                                                                                                                                            
\begin{lemma}\label{lem:bagging}
For all $d\geq 1$, one has
\[
\sum_{i=1}^Nv_i\left(\frac{i}{N+1}\right)^{1/d}\leq \frac{e}{k^{1/d}}\, .
\]
\end{lemma}

In view of \eqref{eicjpaec}, we are led to
\[
t\leq 4e^2\sigma\sqrt{\frac{2km}{n}}\qquad\text{ and }\qquad s\leq \frac{16e^3 }{(\rho k)^{1/d}}\, ,
\]
and, by Lemma~\ref{lem:binomial}, for all $x\in S$,
\[
\P\left( \left| \hat{r}^{\mathsf{mom}}_n(x)-r(x)\right|\geq 4e^2\sigma\sqrt{\frac{2km}{n}}+\frac{16e^3 }{(\rho k)^{1/d}}\right)\leq e^{-m}\, .
\]
As for the $k$-\nn\ case, when $\sigma$ and $\rho$ are known, the integer $k$ may be chosen as the largest integer such that $\frac{4e}{(\rho k)^{1/d}} \geq \sigma\sqrt{\frac{2km}{n}}$, that is
\[
k^\star =\left\lfloor \left(\frac{8e^2n}{\rho^{2/d}\sigma^2m}\right)^{\frac{d}{d+2}}\right\rfloor\, ,
\]
which belongs to $\llbracket 1,N\rrbracket=\llbracket 1,\left\lfloor \frac{n}{m}\right\rfloor\rrbracket$ if
\[
\frac{m}{n}\leq \rho\left(\frac{\sigma}{2e\sqrt{2}}\right)^d \wedge \frac{8e^2}{\sigma^2\rho^{2/d}}=c\, .
\]
In this case, using that $\lfloor u\rfloor \geq u/2$ for $u\geq 1$, we get after some simplification
\[
\P\left( \left| \hat{r}^{\mathsf{mom}}_n(x)-r(x)\right|\geq 64e^3\left(\frac{\sigma^2m}{\rho n}\right)^{\frac{1}{d+2}}\right)\leq e^{-m}\, .
\]
Taking $\delta\in [e^{-\lfloor cn\rfloor},1[$ and $m=\lceil \ln(1/\delta)\rceil$ yields the desired result.
\end{proof}

\begin{proof}[Proof of Lemma~\ref{lem:bagging}]
Recall that
\[
v_i=\left(1-\frac{i-1}{N}\right)^k-\left(1-\frac{i}{N}\right)^k\, .
\]
Since $\sum_{i=1}^N v_i=1$ and since $x\mapsto x^{1/d}$ is concave on $\R_+$, Jensen's inequality gives
\begin{equation}\label{eq:bagging-jensen}
\sum_{i=1}^Nv_i\left(\frac{i}{N+1}\right)^{1/d}\leq \left(\sum_{i=1}^Nv_i\frac{i}{N+1}\right)^{1/d}\, .
\end{equation}
Now
\begin{align*}
\sum_{i=1}^N iv_i &= \sum_{i=1}^N \left\{ (i-1)\left(1-\frac{i-1}{N}\right)^k-i\left(1-\frac{i}{N}\right)^k \right\}+\sum_{i=1}^N \left(1-\frac{i-1}{N}\right)^k = \sum_{i=1}^N\left(\frac{i}{N}\right)^k\, ,
\end{align*}
and by comparing the latter to the associated integral we have
\[
\sum_{i=1}^N\left(\frac{i}{N}\right)^k\leq N\int_{1/N}^{1+1/N} u^k \d u\leq \frac{N}{k+1}\left(1+\frac{1}{N}\right)^{k+1}=\frac{N+1}{k+1}\left(1+\frac{1}{N}\right)^{k}\leq \frac{N+1}{k} e\, .
\]
Coming back to~\eqref{eq:bagging-jensen}, we get
\[
\sum_{i=1}^Nv_i\left(\frac{i}{N+1}\right)^{1/d}\leq  \left(\frac{e}{k}\right)^{1/d}\leq  \frac{e}{k^{1/d}}\, .
\]
\end{proof}

\subsection{Proofs from Section~\ref{sec:kernel-partitioning}}
\label{subsec:proof-kernel}

\begin{proof}[Proof of Proposition~\ref{prop:kernel}]
By inequality \eqref{eq:bias-variance1}, we get
\[
\left|\hat{r}_N(x)-r(x)\right|\leq \left|\sum_{i=1}^N W_i(x)\varepsilon_i\right|+\sum_{i=1}^N W_i(x)\|X_i-x\|\, .
\]
In the kernel case, we have, deterministically,
\begin{align*}
\sum_{i=1}^N W_i(x)\|X_i-x\|= \frac{1}{N_h(x)}\sum_{i=1}^N\|X_i-x\|\ind_{\|X_i-x\|\leq h}\leq h\, .
\end{align*}
Hence, for all $t>0$ and all $x\in S$, Markov's inequality yields 
\begin{align*}
p_{t+h}(x)&=\P(\left|\hat{r}_N(x)-r(x)\right|\geq t+h)\leq \P\left(\left|\sum_{i=1}^N W_i(x)\varepsilon_i\right|\geq t\right)\leq \frac{\sigma^2\E\left[ \sum_{i=1}^N W_i(x)^2\right]}{t^2}\, ,
\end{align*}
so that
\[
p_{t+h}(x)\leq\frac{\sigma^2\E\left[\frac{\ind_{N_h(x)>0}}{N_h(x)}\right]}{t^2}\, \cdot
\]
Since $N_h(x)$ is distributed as a Binomial random variable with parameters $N$ and $\mu\left(\cB(x,h)\right)$, we have, by \cite{MR1920390}, Lemma 4.1, and Assumption~\ref{aljcj}, 
%\begin{align*}
%\E\left[\frac{\ind_{B>0}}{B}\right]&=\sum_{k=1}^N \frac{1}{k}{N \choose k}p^k(1-p)^{N-k}\\
%&\leq \frac{2}{N+1}\sum_{k=1}^N {N+1 \choose k+1}p^k(1-p)^{N-k}\\
%&=\frac{2}{N+1}\sum_{k=2}^{N+1} {N+1 \choose k}p^{k-1}(1-p)^{N+1-k}\\
%&\leq \textcolor{blue}{\frac{2}{(N+1)p}}\, .
%\end{align*}
%Hence, we get  \textcolor{blue}{(pour avoir $\mu\left(\cB(x,h)\right)\geq ch^d$, il faut supposer $h\leq\rho$)}
\[
\E\left[\frac{\ind_{N_h(x)>0}}{N_h(x)}\right]\leq \frac{2}{(N+1) \mu\left(\cB(x,h)\right)}\leq \frac{2m}{\rho nh^d}\, .
\]
Hence, we obtain
\[
p_{t+h}(x)\leq \frac{2\sigma^2m}{\rho nh^dt^2}\, \cdot
\]
Since the right-hand side equals $1/4e^2$ for $t=2e\sqrt{\frac{2\sigma^2m}{\rho nh^d}}$, Lemma~\ref{lem:binomial} implies 
\[
\P\left( \left| \hat{r}^{\mathsf{mom}}_n(x)-r(x)\right|\geq 2e\sqrt{\frac{2\sigma^2m}{\rho n h^d}}+h\right)\leq e^{-m}\, .
\]
When $\sigma$ and $\rho$ are known, the bandwidth $h$ can then be optimized: taking $h$ such that $2e\sqrt{\frac{2\sigma^2m}{\rho nh^d}}=h$, i.e.
\[
h^\star=\left(\frac{8e^2\sigma^2m}{\rho n}\right)^{\frac{1}{d+2}}\, ,
\]
we see that if $m\leq \frac{\rho D^{d+2}n}{8e^2\sigma^2}$, ensuring $h^\star\leq D$, we have 
\[
\P\left( \left| \hat{r}^{\mathsf{mom}}_n(x)-r(x)\right|\geq 2 \left(\frac{8e^2\sigma^2m}{\rho n}\right)^{\frac{1}{d+2}}\right)\leq e^{-m}\, .
\]
Taking $\delta\in [e^{-\lfloor cn\rfloor},1[$ and $m=\lceil \ln(1/\delta)\rceil$ yields the desired result.
\end{proof}

\begin{proof}[Proof of Proposition~\ref{prop:partitioning}]
\textit{Mutatis mutandis}, the reasoning is the same as for kernel estimates. Here again, the bias term can indeed be deterministically bounded:
\begin{equation}\label{coazjcoiajz}
\sum_{i=1}^N W_i(x)\|X_i-x\|= \frac{1}{N_k}\sum_{i=1}^N\ind_{X_i\in A_k}\|X_i-x\|\leq \sqrt{d}K^{-1}\, .
\end{equation}
Hence, for all $t>0$ and $x\in A_k$, we are led to
\begin{equation}\label{eq:bound-variance-partitioning}
p_{t+\sqrt{d}K^{-1}}(x)\leq \frac{\sigma^2\E\left[\frac{\ind_{N_k>0}}{N_k}\right] }{t^2}\leq \frac{2\sigma^2}{(N+1)\mu(A_k)t^2} \leq \frac{2^{d+1}K^d\sigma^2m}{\rho n t^2}\, ,
\end{equation}
where we used that, if $a_k$ denotes the center of $A_k$, then by assumption~\eqref{aljcj}
\[
\mu(A_k)\geq\mu\left(\cB(a_k,(2K)^{-1})\right)\geq \rho(2K)^{-d}\, .
\]
Again, by Lemma~\ref{lem:binomial}, we get
\[
\P\left( \left| \hat{r}^{\mathsf{mom}}_n(x)-r(x)\right|\geq 2e\sqrt{\frac{2^{d+1}K^d\sigma^2m}{\rho n }}+ \sqrt{d}K^{-1}\right)\leq e^{-m}\, .
\]
One may then choose $K$ as the largest integer such that $\sqrt{d}K^{-1} \geq 2e\sqrt{\frac{2^{d+1}K^d\sigma^2m}{\rho n }}$, i.e.
\[
K_\star=\left\lfloor \left(\frac{\rho d n}{2^{d+3}e^2\sigma^2m}\right)^{\frac{1}{d+2}}\right\rfloor\, ,
\]
which belongs to $\N\setminus\{0\}$ as soon as
\[
\frac{m}{n}\leq\frac{\rho d }{2^{d+3}e^2\sigma^2} \, .
\]
Once again, using that $\lfloor u\rfloor \geq u/2$ for $u\geq 1$, we obtain
\[
2\sqrt{d}K_\star^{-1}\leq 4\sqrt{d}\left(\frac{2^{d+3}e^2\sigma^2m}{\rho d n}\right)^{\frac{1}{d+2}}\leq 16e^{2/3}\sqrt{d}\left(\frac{\sigma^2m}{\rho n}\right)^{\frac{1}{d+2}}\, ,
\]
which yields
\[
\P\left( \left| \hat{r}^{\mathsf{mom}}_n(x)-r(x)\right|\geq 16e\sqrt{d}\left(\frac{\sigma^2m}{\rho n}\right)^{\frac{1}{d+2}}\right)\leq e^{-m}\, .
\]

\end{proof}

\begin{proof}[Proof of Proposition~\ref{prop:partitioningbis}]
For all $t>0$, we have 
\[
\P\left(\sup_{x\in S}\left|\hat{r}^{\mathsf{mom}}_n(x)-r(x)\right|> t+\sqrt{d}K^{-1}\right)\leq \P\left( \sup_{x\in S}\sum_{j=1}^m \ind_{\left\{ |\hat{r}^{(j)}(x)-r(x)|> t+\sqrt{d}K^{-1}\right\}}\geq \frac{m}{2}\right)\, .
\]
Then, by inequality \eqref{eq:bias-variance1} of Lemma \ref{lem:bias-variance} and the deterministic bound \eqref{coazjcoiajz} on the bias term, it comes
\[
\P\left(\sup_{x\in S}\left|\hat{r}^{\mathsf{mom}}_n(x)-r(x)\right|> t+\sqrt{d}K^{-1}\right)\leq \P\left( \sup_{x\in S}\sum_{j=1}^m \ind_{\left\{ \left|\sum_{i=1}^N W_i^{(j)}(x)\varepsilon_i^{(j)}\right|> t\right\}}\geq \frac{m}{2}\right)\, ,
\]
where $\varepsilon_1^{(j)},\dots,\varepsilon_N^{(j)}$ stand for the noise variables in block $j$, and where
\[
W_i^{(j)}(x):=\sum_{k=1}^{K^d}\ind_{x\in A_k} \frac{1}{N_k^{(j)}}\ind_{X_i^{(j)}\in A_k}\, ,
\]
with $X_1^{(j)},\dots,X_N^{(j)}$ the features in block $j$, and $N_k^{(j)}$ the number of features in block $j$ falling into $A_k$. Noticing that $W_i^{(j)}(x)$ does not depend on the exact position of $x$ but only on the cube $A_k$ in which it lies, we obtain, with the notation $B_{i,k}^{(j)}:=\ind_{\left\{X_i^{(j)}\in A_k\right\}}$,
\begin{align*}
\P\left(\sup_{x\in S}\left|\hat{r}^{\mathsf{mom}}_n(x)-r(x)\right|> t+\sqrt{d}K^{-1}\right)&\leq \P\left( \sup_{1\leq k\leq K^d}\sum_{j=1}^m \ind_{\left\{ \left|\frac{1}{N_k^{(j)}}\sum_{i=1}^N B_{i,k}^{(j)}\varepsilon_i^{(j)}\right|> t\right\}}\geq \frac{m}{2}\right)\\
&\leq \sum_{k=1}^{K^d}\P\left(\sum_{j=1}^m \ind_{\left\{ \left|\frac{1}{N_k^{(j)}}\sum_{i=1}^N B_{i,k}^{(j)}\varepsilon_i^{(j)}\right|> t\right\}}\geq \frac{m}{2}\right)\\
&=\sum_{k=1}^{K^d}\P\left(\sum_{j=1}^m B_k^{(j)}\geq \frac{m}{2}\right)\, ,
\end{align*}  
thanks to the union bound and with the notation
\[
B_k^{(j)}:=\ind_{\left\{ \left|\frac{1}{N_k^{(j)}}\sum_{i=1}^N B_{i,k}^{(j)}\varepsilon_i^{(j)}\right|> t\right\}}.
\]
Clearly, for each $k\in\llbracket 1,K\rrbracket$, the Bernoulli random variables $(B_k^{(j)})_{1\leq j\leq m}$ are i.i.d.\ with parameter 
\[
p_k:=\P\left( \left|\frac{1}{N_k^{(j)}}\sum_{i=1}^N B_{i,k}^{(j)}\varepsilon_i^{(j)}\right|> t\right)\leq \frac{2^{d+1}K^d\sigma^2m}{\rho n t^2}\, ,
\]
where the upper bound, which does not depend on $k$, comes from \eqref{eq:bound-variance-partitioning}. We may now apply inequality \eqref{piejpozpaozjd} in the proof  of Lemma~\ref{lem:binomial} to deduce that 
\[
\P\left(\sup_{x\in S}\left|\hat{r}^{\mathsf{mom}}_n(x)-r(x)\right|> t+\sqrt{d}K^{-1}\right)\leq K^d\cdot 2^m \left(\frac{2^{d+1}K^d\sigma^2m}{\rho n t^2}\right)^{m/2}\, .
\]
Choosing $t$ appropriately, we get
\[
\P\left(\sup_{x\in S}\left|\hat{r}^{\mathsf{mom}}_n(x)-r(x)\right|> e\sqrt{\frac{2^{d+3}\sigma^2K^{d+\frac{2d}{m}}m}{\rho n}}+\sqrt{d}K^{-1}\right)\leq e^{-m}\, .
\]
The end of the proof is then the same as for Proposition~\ref{prop:partitioning}. Indeed, one may choose $K$ as the largest integer such that $\sqrt{d}K^{-1} \geq e\sqrt{\frac{2^{d+3}\sigma^2K^{d+\frac{2d}{m}}m}{\rho n}}$, i.e.
\[
K_\star=\left\lfloor \left(\frac{\rho d n}{2^{d+3}e^2\sigma^2m}\right)^{\frac{1}{d+2+\frac{2d}{m}}}\right\rfloor\, ,
\]
which belongs to $\N\setminus\{0\}$ as soon as
\[
\frac{m}{n}\leq\frac{\rho d }{2^{d+3}e^2\sigma^2} \, .
\]
Using again that $\lfloor u\rfloor \geq u/2$ for $u\geq 1$, we obtain
\[
2\sqrt{d}K_\star^{-1}\leq 4\sqrt{d}\left(\frac{2^{d+3}e^2\sigma^2m}{\rho d n}\right)^{\frac{1}{d+2+\frac{2d}{m}}}\leq 16e\sqrt{d}\left(\frac{\sigma^2m}{\rho n}\right)^{\frac{1}{d+2+\frac{2d}{m}}}\, ,
\]
which yields
\[
\P\left(\sup_{x\in S} \left| \hat{r}^{\mathsf{mom}}_n(x)-r(x)\right|\geq 16e\sqrt{d}\left(\frac{\sigma^2m}{\rho n}\right)^{\frac{1}{d+2+\frac{2d}{m}}}\right)\leq e^{-m}\, .
\]
\end{proof}

\bigskip

\noindent\textbf{Acknowledgments.} The authors would like to thank Julien Reygner for fruitful discussions as well as Matthieu Lerasle for pointing out several references and for relevant comments on a first version of the article.

\bibliographystyle{abbrvnat}
\bibliography{biblio}

\end{document}